\newtheorem{theorem}{Theorem}
\newtheorem{lemma}[theorem]{Lemma}
\newtheorem{proposition}[theorem]{Proposition}
\newtheorem{corollary}[theorem]{Corollary}
\theoremstyle{definition}
\newtheorem*{definition*}{Definition}
\newtheorem*{glll}{The General Lov\'asz Local Lemma}
\newcommand{\EE}{\mathbb{E}}
\newcommand{\EZ}{\mathbb{Z}}
\newcommand{\cD}{\mathcal{D}}
\newcommand{\cE}{\mathcal{E}}
\newcommand{\bI}{\mathbf{I}}
\newcommand{\cI}{\mathcal{I}}
\newcommand{\bX}{\mathbf{X}}
\newcommand{\bZ}{\mathbf{Z}}
\newcommand{\sH}{\mathscr{H}}
\newcommand{\pth}[1]{\left( #1 \right)}
\newcommand{\lam}{\lambda}
\renewcommand{\epsilon}{\varepsilon}
\newcommand{\eps}{\varepsilon}
\renewcommand{\Pr}{\mathbb{P}}
\newcommand{\REALS}{\mathbb{R}}
\newcommand{\val}{\tau}
\DeclareMathOperator{\dom}{dom}
\title{Colouring triangle-free graphs with local list sizes}
\author[E.\ Davies]{Ewan Davies}
\address{Korteweg--De Vries Institute for Mathematics, University of Amsterdam, Netherlands.}
\email{maths@ewandavies.org}
\thanks{(E.\ Davies) The research leading to these results has received funding from the European Research Council under the European Union's Seventh Framework Programme (FP7/2007-2013) / ERC grant agreement \textnumero{} 339109.}
\author[R.\ de Joannis de Verclos]{R\'emi de Joannis de Verclos}
\address{Department of Mathematics, Radboud University Nijmegen, Netherlands.}
\email{r.deverclos@math.ru.nl}
\author[R. J.\ Kang]{Ross J. Kang}
\address{Department of Mathematics, Radboud University Nijmegen, Netherlands.}
\email{ross.kang@gmail.com}
\thanks{(R.\ de Joannis de Verclos, R. J.\ Kang) Supported by a Vidi grant (639.032.614) of the Netherlands Organisation for Scientific Research (NWO)}
\author[F.\ Pirot]{Fran\c{c}ois Pirot}
\address{Department of Mathematics, Radboud University Nijmegen, Netherlands and  LORIA, Universit\'e de Lorraine, Nancy, France.}
\email{francois.pirot@loria.fr}
\subjclass[2010]{Primary 05C35, 05C15; Secondary 05D10}
\keywords{Triangle-free graphs, list colouring, hard-core model}
\begin{document}
\begin{abstract}
We prove two distinct and natural refinements of a recent breakthrough result of Molloy (and a follow-up work of Bernshteyn) on the (list) chromatic number of triangle-free graphs. In both our results, we permit the amount of colour made available to vertices of lower degree to be accordingly lower. One result concerns list colouring and correspondence colouring, while the other concerns fractional colouring.
Our proof of the second illustrates the use of the hard-core model to prove a Johansson-type result, which may be of independent interest.
\end{abstract}

\maketitle

\section{Introduction}

The chromatic number of triangle-free graphs is a classic topic, cf.\ e.g.~\cite{UnDe54,Zyk49}, and has been deeply studied from many perspectives, including algebraic, probabilistic, and algorithmic. It is attractive because of its elegance and its close connection to quantitative Ramsey theory~\cite{AKS81,She83}.

Recently Molloy~\cite{Mol19} obtained a breakthrough by showing that, given $\eps>0$, every triangle-free graph of maximum degree $\Delta$ has chromatic number at most $\lceil(1+\eps)\Delta/\log \Delta\rceil$, provided $\Delta$ is sufficiently large. This achievement improved on the seminal work of Johansson~\cite{Joh96} in two ways, one by lowering the leading asymptotic constant (perhaps even to optimality) and the other by giving a much simpler proof (via entropy compression).

Molloy's result actually guarantees a proper colouring of the graph in the more general situation that every vertex is supplied permissible colour lists of size $\lceil (1+\eps)\Delta/\log \Delta \rceil$. It is natural to ask what happens if fewer colours are supplied to vertices that are not of maximum degree; indeed one might expect the low degree vertices to be easier to colour in a quantifiable way.

The general idea of having ``local'' list sizes is far from new; it can be traced at least back to degree-choosability as introduced in one of the originating papers for list colouring~\cite{ERT80}. Recently Bonamy, Kelly, Nelson, and Postle~\cite{BKNP18+} initiated a modern and rather general treatment of this idea, including with respect to triangle-free graphs. (A conjecture of King~\cite{Kin09thesis} and related work are in the same vein.)
We show the following result.

\begin{theorem}\label{thm:localmolloy}
Fix $\eps > 0$, let $\Delta$ be sufficiently large, and let $\delta=(192\log \Delta)^{2/\eps}$. Let $G$ be a triangle-free graph of maximum degree $\Delta$ and $L : V(G)\to 2^{\EZ^+}$ be a list assignment of $G$ such that for all $v\in V(G)$,
\[
|L(v)| \ge (1+\eps)\max\Bigg\{\frac{\deg(v)}{\log \deg(v)},\,\frac{\delta}{\log \delta}\Bigg\}\,,
\] 
Then there exists a proper colouring $c: V(G) \to \EZ^+$ of $G$ such that $c(v) \in L(v)$ for all $v\in V(G)$.
\end{theorem}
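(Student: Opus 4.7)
The plan is to leverage the Bernshteyn/Molloy list-colouring theorem for triangle-free graphs (the uniform-degree case of our statement) by means of a degree-stratification argument. First I would reduce to the case where $\Delta(G) > \delta$: if $\Delta(G) \le \delta$ then every list has size at least $(1+\eps)\delta/\log\delta \ge (1+\eps)\Delta(G)/\log\Delta(G)$, so Bernshteyn's theorem applies directly. Assuming then that $\Delta > \delta$, I would partition the vertex set into nested level sets $V_0 \supsetneq V_1 \supsetneq \cdots \supsetneq V_k$ with $V_i := \{v \in V(G) : \deg(v) \ge \delta\alpha^i\}$ for some constant $\alpha$ slightly larger than $1$ and $k = O(\log(\Delta/\delta))$; set $W_i := V_i \setminus V_{i+1}$.

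I would then colour the layers from the innermost (highest degree) outwards. Given any partial colouring of $\bigcup_{j>i} W_j$, each uncoloured vertex $v$ has a residual list $L'(v) := L(v) \setminus \{c(u) : u \in N(v),\, u \text{ coloured}\}$, and I would apply Bernshteyn's theorem to the residual list-colouring problem on the subgraph induced by the uncoloured vertices. The invariant to maintain across rounds is
\[
|L'(v)| \ge (1+\eps/2)\max\!\left\{\frac{\deg'(v)}{\log \deg'(v)},\,\frac{\delta}{\log\delta}\right\},
\]
where $\deg'(v)$ denotes $v$'s degree in the uncoloured subgraph. As long as this invariant holds going into each round, Bernshteyn's theorem applies at every layer; once all layers are exhausted we drop back to the base case already handled.

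The main obstacle is verifying that the invariant is preserved after each layer is coloured. Coloured vertices in $W_j$ have list size $\Theta(\deg_j/\log\deg_j)$ (where $\deg_j \approx \delta\alpha^j$), so each colour of $L(v)$ is used by a given neighbour $u \in W_j$ with probability roughly $1/|L(u)|$, and the expected number of colours of $L(v)$ lost to $W_j$ is $O(|N(v)\cap W_j|\log\deg_j / \deg_j)$. Summing geometrically over $j$, the expected total loss is $O(\log\Delta)$. The floor $|L(v)| \ge (1+\eps)\delta/\log\delta \gg \log\Delta$---guaranteed by $\delta^{\eps/2} \ge 192\log\Delta$---provides the slack needed to run a Chernoff/Talagrand-type concentration bound together with the Lov\'asz Local Lemma, so that with positive probability the invariant holds simultaneously for all vertices. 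Executing this concentration argument rigorously (likely via a hard-core distribution over valid partial colourings, as the abstract hints) and correctly coupling the layer-wise invocations of Bernshteyn's theorem with the concentration-based invariant is where the bulk of the technical work lies.
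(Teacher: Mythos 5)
Your proposal has a genuine gap at its core step: the preservation of the invariant between layers. There are two problems. First, Bernshteyn's (or Molloy's) theorem is a pure existence statement; once you invoke it as a black box on a layer there is no probability distribution over the resulting colourings, and in particular no basis for the claim that a neighbour $u\in W_j$ uses a given colour of $L(v)$ with probability roughly $1/|L(u)|$. Obtaining a proper colouring with near-uniform marginals is a much stronger (sampling) result than the existence theorem you cite. Second, even granting uniform marginals, the expected-loss estimate is wrong: the expected number of colours $v$ loses to a single coloured neighbour $u$ is $\Pr(c(u)\in L(v))=|L(v)\cap L(u)|/|L(u)|$, not $1/|L(u)|$; since lists may coincide this can be as large as $|L(v)|/|L(u)|$, and a vertex $v\in W_i$ may have essentially all $\deg(v)$ of its neighbours in the layer just above, giving a total expected loss of order $\deg(v)$, which vastly exceeds $|L(v)|\approx\deg(v)/\log\deg(v)$. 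The structural issue is that each coloured neighbour removes up to one colour but only one unit of residual degree, while the list is a $1/\log\deg(v)$ fraction of the degree, so lists shrink proportionally much faster than degrees and the invariant cannot survive a layer in the worst case; no concentration inequality rescues an expectation that is already too large. This is precisely why Molloy-type arguments do not peel by degree but instead analyse a single random partial colouring in which the colours retained at $v$ are those simultaneously avoided by the whole neighbourhood (so that the residual list, though tiny, is still polynomial in $\deg(v)$ while the residual colour-degree drops to $O(\log\Delta)$).

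The paper's route is correspondingly different: it reruns Bernshteyn's one-shot random partial colouring, whose analysis is already carried out vertex by vertex and therefore localizes with only minor changes (Lemma~\ref{lem:3.6}), leaving each uncoloured $u$ with residual list size at least $\ell(u)=\max\{\deg(u)^{\eps/2},\delta^{\eps/2}\}$ and residual colour-degrees at most $24\log\Delta$; the genuinely new ingredient is a \emph{local} version of the finishing blow (Lemma~\ref{lem:finishingblow}), a Lov\'asz Local Lemma argument in which the degree condition at a colour $c\in L(u)$ is measured against $\min_{v\in N_G(u)}\ell(v)$ rather than against a global minimum list size. If you wish to pursue a layered scheme you would need distributional control over the colouring of each layer and a correct accounting of list overlaps; as it stands, the argument does not go through.
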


\noindent This of course implies Molloy's theorem, and can be considered a local strengthening. When the graph $G$ in Theorem~\ref{thm:localmolloy} is of minimum degree $\delta$, the list size condition is local in the sense that the lower bound on $|L(v)|$ reduces to a function of $\deg(v)$ and no other parameter of $G$. 
Theorem~\ref{thm:localmolloy} (or rather the stronger Theorem~\ref{thm:localmolloyDP} below) improves upon~\cite[Thm.~1.12]{BKNP18+}, by having an asymptotic leading constant of $1$ rather than $4\log 2$, at the expense of requiring a larger minimum list size. Our proof relies heavily on the work of Bernshteyn~\cite{Ber19}, who gave a further simplified proof for a stronger version of Molloy's theorem. For Theorem~\ref{thm:localmolloy}, it has sufficed to prove a local version of the so-called ``finishing blow'' (see Lemma~\ref{lem:finishingblow} below) and to notice that there is more than enough slack in Bernshteyn's (and indeed Molloy's) argument to satisfy the new blow's hypothesis.

We also provide a local version of Molloy's theorem for a relaxed, fractional form of colouring. Writing $\cI(G)$ for the set of independent sets of $G$, and $\mu$ for the standard Lesbegue measure on $\REALS$, a \emph{fractional colouring} of a graph $G$ is an assignment $w(I)$ for $I\in \cI(G)$ of pairwise disjoint measurable subsets of $\REALS$ to independent sets such that $\sum_{I\in\cI(G), I \ni v}\mu(w(I))\ge 1$  for all $v\in V(G)$. 
Such a colouring naturally induces an assignment of measurable subsets to the vertices of $G$, namely $w(v) =\bigcup_{I\in\cI(G), I \ni v} w(I)$ for each $v\in V(G)$, such that $w(u)$ and $w(v)$ are disjoint whenever $uv\in E(G)$. 
The total weight of the fractional colouring is $\hat w(G)=\sum_{I\in\cI(G)}\mu(w(I))$.

\begin{theorem}
\label{thm:fracmolloy}
For all $\eps>0$ there exists $\delta>0$ such that every triangle-free graph $G$ admits a fractional colouring $w$ such that for every $v\in V(G)$
\[
w(v) \subseteq \left[0,(1+\eps)\max\Big\{\frac{\deg(v)}{\log \deg(v)},\,\frac{\delta}{\log\delta}\Big\}\right).
\]
\end{theorem}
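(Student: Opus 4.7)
The plan is to reduce the problem to a local Johansson-type estimate for the hard-core model and then combine hard-core distributions over a one-parameter family of subgraphs into a fractional colouring. Let $f(v) := (1+\eps)\max\{\deg(v)/\log\deg(v),\,\delta/\log\delta\}$ and $M := \max_{v}f(v)$. For each $t\in[0,M)$, define the ``live'' subgraph $G_t := G[V_t]$ where $V_t := \{v\in V(G) : f(v) > t\}$; each $G_t$ inherits triangle-freeness from $G$.

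First I would establish the following local hard-core lemma: for an appropriate (possibly vertex-dependent) choice of fugacity, the hard-core distribution $\pi_t$ on $\cI(G_t)$ satisfies $\Pr_{\pi_t}[v\in\bI]\ge 1/f(v)$ for every $v\in V_t$. This is the promised Johansson-type result, proved by adapting the occupancy-fraction method of Davies--Jenssen--Perkins--Roberts: since $G$ is triangle-free, the neighbourhood $N(v)$ is an independent set, so after conditioning on $v\notin\bI$ one may express $\log(1/(1-p_v))$ in terms of a manageable local quantity and apply Jensen's inequality to recover a per-vertex marginal bound. The main obstacle lies here, both in making the bound truly \emph{local} in $\deg(v)$ (not merely in $\Delta(G_t)$), and in handling the low-degree regime uniformly by means of the cutoff at~$\delta$.

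Granted the lemma, I would define
\[
  w(I) := \int_0^M \pi_t(I)\,\mathbf{1}[I\subseteq V_t]\,dt.
\]
Then $\sum_I w(I)=M$ and $\sum_{I\ni v}w(I) = \int_0^{f(v)}\Pr_{\pi_t}[v\in\bI]\,dt\ge 1$. To realise the colour classes as pairwise disjoint measurable subsets of $\REALS$ satisfying the placement constraint $w(v)\subseteq[0,f(v))$, I would enumerate $\cI(G)$ in order of increasing $\hat f(I) := \min_{v\in I}f(v)$ and assign each $I$ a half-open interval of length $w(I)$ laid out left to right. Using the identity $w(I')=\int_0^{\hat f(I')}\pi_t(I')\,dt$, a short computation shows $\sum_{I':\hat f(I')\le \hat f(I)} w(I')\le \hat f(I)\le f(v)$ for every $v\in I$, so the interval assigned to $I$ fits inside $[0,f(v))$ as required. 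This finishes the plan modulo the occupancy lemma, which is the substantive step.
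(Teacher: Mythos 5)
There is a genuine gap, and it sits exactly where you flag ``the main obstacle'': the per-vertex marginal bound $\Pr_{\pi_t}[v\in\bI]\ge 1/f(v)$ is not just hard to prove --- it is false for the hard-core model on triangle-free graphs. Take the star $K_{1,d}$ with centre $v$: at fugacity $\lam$ one has $\Pr(v\in\bI)=\lam/(\lam+(1+\lam)^d)$, which for constant $\lam$ is exponentially small in $d$, and even for the optimal choice $\lam\approx\log d/d$ is only of order $\log d/d^2$ --- far below the required $\Theta(\log d/d)$. More generally, $\Pr(v\in\bI)=\frac{\lam}{1+\lam}\EE[(1+\lam)^{-\bZ}]$ where $\bZ$ is the number of uncovered neighbours of $v$, and whenever the neighbours of $v$ are typically uncovered this is exponentially small in $\deg(v)$. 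Allowing vertex-dependent fugacities does not rescue the plan in any transparent way: producing, for every induced subgraph, a distribution with all the required marginals is essentially equivalent (by LP duality) to the fractional colouring statement you are trying to prove. Since your entire layering construction consumes only the marginals $\Pr_{\pi_t}[v\in\bI]$, the reduction collapses once the occupancy lemma fails.

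The paper's route differs at precisely this point. Its hard-core lemma (Lemma~\ref{lem:hcmbound}) lower-bounds not the marginal but a linear combination $\alpha_v\Pr(v\in\bI)+\beta_v\EE|N(v)\cap\bI|$ --- in the star example the second term is large even though the first is tiny --- and its greedy fractional colouring lemma (Lemma~\ref{lem:chifalg}) is engineered to accept exactly this weaker local input: the termination argument shows that once the accumulated weight reaches $\gamma(v)=\alpha_0(v)+\sum_{j\ge1}\alpha_j(v)|N_G^j(v)|$, the vertex $v$ must already be fully coloured, using that each neighbour can absorb weight at most $1$. Your interval-layering construction is in effect a continuous version of that greedy algorithm specialised to the $j=0$ term only, which is why it demands the false marginal bound. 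To repair the proof you would need to reintroduce the neighbourhood term, i.e.\ replace your occupancy lemma by a bound of the paper's mixed form and rework the layering (or adopt the paper's Algorithm~\ref{alg-greedy}) so that weight landing on $N(v)$ also counts toward finishing $v$.
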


\noindent
Again, when $G$ is of minimum degree $\delta$ our condition on $w(v)$ reduces to a function of $\deg(v)$ alone, yielding a local condition. 
Clearly Theorem~\ref{thm:fracmolloy} is not implied by Molloy's theorem nor is the converse true, but both results imply that the fractional chromatic number of a triangle-free graph of maximum degree $\Delta$ is at most $(1+o(1))\Delta/\log\Delta$.
We believe that the main interest in Theorem~\ref{thm:fracmolloy} will be in its derivation. We give a short and completely self-contained proof by analysing a probability distribution on independent sets known as the \emph{hard-core model} in triangle-free graphs (Lemma~\ref{lem:hcmbound}), and demonstrating that to obtain the desired result it suffices to feed this distribution as input to a greedy fractional colouring algorithm (Lemma~\ref{lem:chifalg}). Since it makes no use of the Lov\'asz Local Lemma, the proof is unlike any other derivation of a Johansson-type colouring result (regardless of local list sizes). This may be of independent interest.

The asymptotic leading constant of $1$ in the conditions of both Theorems~\ref{thm:localmolloy} and~\ref{thm:fracmolloy} cannot be improved below $1/2$ due to random regular graphs~\cite{FrLu92}. 
In fact, as a corollary of either result we match asymptotically the upper bound of Shearer~\cite{She83} for off-diagonal Ramsey numbers. So any improvement below $1$, or even to $1$ precisely (i.e.~removal of the $\eps$ term), would be a significant advance.
To give more detail, Shearer proved\footnote{In fact, Shearer proved a strengthening of this bound with $\Delta$ replaced by the average degree of $G$.} that as $\Delta\to\infty$ any triangle-free graph on $n$ vertices of maximum degree $\Delta$ contains an independent set of size at least 
\begin{equation}\label{eq:shearer}
(1+o(1))\frac{n\log\Delta}{\Delta}.
\end{equation} 
It is easy to show that any graph contains an independent set of size at least $n/\chi$ if it permits any of a fractional colouring with total weight $\chi$, a proper colouring with $\chi$ colours, or an $L$-colouring whenever $|L(v)|\ge\chi$ for all vertices $v$, and hence the leading constant in the bound of Molloy, and in Theorems~\ref{thm:localmolloy} and~\ref{thm:fracmolloy} cannot be improved without improving this `Shearer bound' on the independence number of triangle-free graphs. 
Our analysis of the hard-core model on triangle-free graphs has its roots in~\cite{davies2018average}, where the first author, Jenssen, Perkins, and Roberts showed that for $G$ as above,~\eqref{eq:shearer} is a lower bound on the expected size of an independent set from the hard-core model (when a parameter known as the \emph{fugacity} is not too small). 
Most intriguingly, they proved that their result is asymptotically tight by appealing to the random regular graph, whereas Theorem~\ref{thm:fracmolloy} is not known to be tight: there is a factor two gap between the fractional chromatic number of the random regular graph and the bound one gets via Theorem~\ref{thm:fracmolloy} or Molloy's result. 
We define the hard-core model and the fugacity parameter in Section~\ref{sec:hardcore}.

Allow us to make some further remarks related to the maxima that occur in the list and weight conditions of Theorems~\ref{thm:localmolloy} and~\ref{thm:fracmolloy}, which at first sight seem artificial and unnecessary. 
In Theorem~\ref{thm:fracmolloy} the parameter $\delta$ is a function of $\eps$ alone but in Theorem~\ref{thm:localmolloy} we require $\delta$ to grow with $\Delta$.
So for large enough $\Delta$ the value of $\delta$ in Theorem~\ref{thm:fracmolloy} is strictly smaller\footnote{A crude estimation shows that (for $\eps$ smaller than some absolute constant) our proof of Theorem~\ref{thm:fracmolloy} permits $\delta=(3/\eps)^{3/\eps}$, so for $\Delta \ge \exp(1/\eps^2)$ this would occur.} than the value in Theorem~\ref{thm:localmolloy}, and since the list chromatic number can be much larger than the fractional chromatic number (even for bipartite graphs) neither of Theorems~\ref{thm:localmolloy} and~\ref{thm:fracmolloy} implies the other.
In Section~\ref{sec:necessary}, we show that these are truly distinct results in that, unlike in Theorem~\ref{thm:fracmolloy}, some non-trivial (albeit very slight) dependence between minimum list size and maximum degree is necessary in Theorem~\ref{thm:localmolloy}. 
Last observe that, if we were able to improve either result by lowering $\delta$ to a quantity independent of $\eps$, then it would constitute a significant improvement over Shearer's bound.

We are hopeful that some of the techniques we used in this paper might also be applicable to other natural colouring problems in triangle-free graphs, such as bounding the (list) chromatic number in terms of the number of vertices, cf.~\cite[Conjs.~4.3 and~6.1]{CJKP18+}, but leave this for further investigation.

\subsection{Structure of the paper}

In Section~\ref{sec:fracgreedy}, we prove a greedy fractional colouring lemma (Lemma~\ref{lem:chifalg}).
We give a local analysis of the hard-core model in triangle-free graphs in Section~\ref{sec:hardcore}, culminating in Lemma~\ref{lem:hcmbound}. As a demonstration of its further applicability, we also use Lemma~\ref{lem:hcmbound} to give a good bound on semi-bipartite induced density in triangle-free graphs (Theorem~\ref{thm:semibipartite}), a concept related to a recent conjecture of Esperet, Thomass\'e and the third author~\cite{esperet2018separation}.
We prove Theorem~\ref{thm:fracmolloy} in Section~\ref{sec:fracmolloy}.
In Section~\ref{sec:finishingblow}, we review the definition of correspondence colouring and prove for it a local version of the ``finishing blow'' (Lemma~\ref{lem:finishingblow}). In Section~\ref{sec:localmolloy} we sketch how Bernshteyn's argument can then be adapted to prove Theorem~\ref{thm:localmolloy}.
In Section~\ref{sec:necessary}, we present a simple construction (Proposition~\ref{prop:necessary}) to show that even some bipartite graphs cannot satisfy the conclusions of Theorem~\ref{thm:localmolloy} without a suitable lower bound on $\delta$.

\subsection{Notation and preliminaries}
\label{sub:def}

For a graph $G$ and vertex $v\in V(G)$, we write $N_G(v)$ for the set of neighbours of $v$ in a graph $G$, and $\deg_G(v)=|N_G(v)|$ for the degree of a vertex, where we omit the subscript $G$ if it is clear from context. 
For $i\ge0$ we write $N_G^i(v)$ for the set of vertices in $G$ at distance exactly $i$ from $v$, so that e.g.\ $N_G^0(v)=\{v\}$, and $N_G^1(v)=N_G(v)$.
We have already indicated above that $\cI(G)$ denotes the set of independent sets of $G$. Note that $\varnothing\in\cI(G)$ for all $G$.

The function $W$ is the inverse of $z\mapsto ze^z$, also known as the \emph{Lambert $W$-function},  which satisfies $W(x)=\log x - \log\log x + o(1)$ as $x\to\infty$.

We will have use for the following probabilistic tool, see~\cite{AS16book}.

\begin{glll}
Consider a set $\cE=\{A_1,\dots,A_n\}$ of (bad) events such that
each $A_i$ is mutually independent of $\cE-(\cD_i\cup A_i)$, for some $\cD_i \subseteq \cE$.
If we have reals $x_1,\dots,x_n \in[0,1)$ such that for each $i$
\[
\Pr(A_i) \le x_i \prod_{A_j\in \cD_i} (1-x_j),
\]
then the probability that no event in $\cE$ occurs is at least $\prod_{i=1}^n \limits  (1-x_i)>0$.
\end{glll}

\section{A fractional colouring algorithm}
\label{sec:fracgreedy}

The following result for local fractional colouring is slightly stronger than what we require in the proof of Theorem~\ref{thm:fracmolloy}, but the proof is no different from that needed for the weaker statement.

\begin{lemma}\label{lem:chifalg}
Fix a positive integer $r$.
Let $G$ be a graph and suppose that for every vertex $v\in V(G)$ we have a list $(\alpha_j(v))_{j=0}^r$ of $r+1$ real numbers.
Suppose that for all induced subgraphs $H$ of $G$, there is a probability distribution on $\cI(H)$ such that, writing $\bI_H$ for the random independent set from this distribution, for each $v\in V(H)$ we have the bound
\[\sum_{j=0}^r \alpha_j(v)\EE\big|N_H^j(v)\cap \bI_H\big| \geq 1.\]
Then there exists a fractional colouring of $G$ such that every $v\in V(G)$ is coloured with a subset of the interval
$\big[0,\sum_{j=0}^r \alpha_j(v)\big|N_G^j(v)\big|\big)$.
\end{lemma}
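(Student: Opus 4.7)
My plan is to prove Lemma~\ref{lem:chifalg} by a continuous-time greedy algorithm that builds the fractional colouring $w$ incrementally. I set $H_0 := G$ and let the time $t$ advance from $0$. At each instant $t$, I invoke the probability distribution on $\cI(H_t)$ furnished by the hypothesis, and over the window $[t, t+dt)$ I append, for each $I \in \cI(H_t)$, a sub-interval of length $\Pr(\bI_{H_t} = I)\,dt$ to $w(I)$. Equivalently, each $u \in V(H_t)$ accumulates coloured measure at rate $q_u(H_t) := \Pr(u \in \bI_{H_t})$, and I remove $u$ from $H_t$ at the time $\sigma(u)$ at which its cumulative measure first reaches $1$. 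By construction the resulting $w$ is a valid fractional colouring with $\mu(w(u)) = 1$ and $w(u) \subseteq [0, \sigma(u))$ for every $u \in V(G)$, so the lemma reduces to showing $\sigma(v) \le \sum_{j=0}^r \alpha_j(v)|N_G^j(v)|$ for every $v$.

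To establish this bound I fix $v$, apply the hypothesis at every $t < \sigma(v)$ (at which time $v \in V(H_t)$) to write
\[
1 \le \sum_{j=0}^r \alpha_j(v)\sum_{u\in N_{H_t}^j(v)} q_u(H_t),
\]
and integrate this inequality over $[0,\sigma(v))$. Swapping the outer sum over $u$ with the integral over $t$, and using the uniform bound $\int_0^{\sigma(v)} q_u(H_t)\,dt \le 1$ (since $u$'s total coloured measure is at most $1$), is meant to yield the required inequality. The main anticipated obstacle is precisely this final bookkeeping: because $H_t$ is an induced subgraph of $G$, one has $d_{H_t}(v,u) \ge d_G(v,u)$ and equality can fail once intermediate vertices have been deleted, so the same $u$ may drift through several $N_{H_t}^j(v)$-buckets across the algorithm's run. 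Attributing the contribution of each such $u$ to the correct term $\alpha_{d_G(v,u)}(v) |N_G^{d_G(v,u)}(v)|$---so that the right-hand side reads exactly $\sum_j \alpha_j(v)|N_G^j(v)|$ rather than some larger expression involving $H_t$-distances---is the step that will require the most care.
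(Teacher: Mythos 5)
Your algorithm is the paper's: Algorithm~1 there is exactly the piecewise-constant discretization of your continuous process (the distribution only changes when a vertex finishes, and the paper's increment $\tau$ is additionally capped by $\min_{v\in V(H)}\gamma(v)-\hat w(G)$, where $\gamma(v)=\sum_{j}\alpha_j(v)|N_G^j(v)|$). The construction of $w$ from consecutive half-open intervals and the reduction to ``each $v$ finishes before the total measure reaches $\gamma(v)$'' are likewise identical. The problem is the step you yourself flag as unresolved: it is a genuine gap, not mere bookkeeping. Your plan is to bound $\int_0^{\sigma(v)}\sum_j\alpha_j(v)\sum_{u\in N^j_{H_t}(v)}q_u(H_t)\,dt$ by charging each vertex $u$ its lifetime integral $\int q_u(H_t)\,dt\le1$ against the single term $\alpha_{d_G(v,u)}(v)$. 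Since $d_{H_t}(v,u)\ge d_G(v,u)$ and $u$ can drift upward through the $H_t$-distance classes as vertices are deleted, this charge is only valid if $\alpha_{d_{H_t}(v,u)}(v)\le\alpha_{d_G(v,u)}(v)$ throughout, i.e.\ if the $\alpha_j(v)$ are non-negative and non-increasing in $j$ --- neither of which the lemma assumes. (In the case $r=1$, which is all the application needs, $N^0_{H_t}(v)=\{v\}$ and $N^1_{H_t}(v)\subseteq N_G(v)$ cannot drift, and your argument closes cleanly given $\alpha_1(v)\ge0$.)

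The paper finishes differently, and the difference is instructive. Rather than bounding the integral up to $\sigma(v)$ directly, it caps each increment so that the running total $\hat w(G)$ never exceeds $\gamma(v)$ for any still-uncoloured $v$; containment of $w(v)$ in $[0,\gamma(v))$ is then automatic and only termination remains. For that, it shows that if the cap binds at some live $v$, i.e.\ $\hat w(G)$ reaches $\gamma(v)$, then summing the hypothesis over all past iterations yields $\sum_j\alpha_j(v)\sum_{u\in N_G^j(v)}\hat w(u)\ge\hat w(G)=\gamma(v)$; since every $\hat w(u)\le1$, equality is forced throughout, in particular $\alpha_0(v)\hat w(v)\ge\alpha_0(v)$, so $v$ is in fact finished at that moment. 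To be fair, the paper's summation step performs the same regrouping from $N^j_{H_k}(v)$ to $N^j_G(v)$ that worries you, and it too is only immediate when the classes can only shrink ($r\le1$) and the relevant $\alpha_j(v)$ are non-negative --- you have located a real subtlety that the write-up glosses over. But a complete proof must resolve it one way or another (e.g.\ by adding the natural hypothesis that the $\alpha_j(v)$ are non-negative and non-increasing in $j$, which holds in the application), and your proposal currently stops exactly at that point.
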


\begin{proof}
We present a refinement of an algorithm given in the book of Molloy and Reed~\cite{MoRe02}, and show that under the assumptions of the lemma, it returns the desired fractional colouring.
The idea of the algorithm is to greedily add weight to independent sets according to the probability distribution induced on all not yet fully coloured vertices.
For brevity, we write $\gamma(v) = \sum_{j=0}^r \alpha_j(v)\big|N_G^j(v)\big|$.

We build a fractional colouring $w$ in several iterations, and we write $\hat w(I)$ for $\mu(w(I))$ so that $\hat w(I)$ is a non-negative integer representing the measure $w$ assigns to $I$. 
Through the iterations, $w$ is a partial fractional colouring in the sense of not yet having satisfied the condition that $\sum_{I\in\cI(G), I \ni v}\hat w(I)\ge 1$  for all $v\in V(G)$. 
We extend our notational conventions for $w$ to $\hat w$, so that $\hat{w}(G) = \sum_{I\in \cI(G)} \hat{w}(I)$ is the total measure used by the current partial colouring, and $\hat{w}(v) = \sum_{I\in\cI(G), I \ni v} \hat{w}(I)$ for any $v\in V(G)$ is the total measure given to a vertex $v$ by the current partial colouring.

\begin{algorithm}\caption{The greedy fractional colouring algorithm}\label{alg-greedy}
    \begin{algorithmic}[0]
        \FOR{$I\in \cI(G)$}
            \STATE$\hat{w}(I)\gets0$
        \ENDFOR
        \STATE $H\gets G$
        \WHILE{$|V(H)|>0$}
            \STATE $\displaystyle \val \gets \min\left\{ \min_{v\in V(H)} \frac{1-\hat{w}(v)}{\Pr(v\in \bI_H)}, \min_{v\in V(H)} \gamma(v)-\hat{w}(G)\right\}$
            \FOR{$I\in\cI(H)$}
                \STATE $\hat{w}(I)\gets\hat{w}(I)+\Pr(\bI_{H}=I) \val$
            \ENDFOR
            \STATE $H \gets H - \{v \in V(H) : \hat{w}(v)=1\}$
        \ENDWHILE
    \end{algorithmic}
\end{algorithm}

%

We next show that Algorithm~\ref{alg-greedy} certifies the desired fractional colouring.
For the analysis, it is convenient to index the iterations: for $i=0,1,\dots$, let $H_i$, $\hat{w}_i(I)$, $\hat{w}_i(v)$, $\hat{w}_i(G)$, $\val_i$ denote the corresponding $H$, $\hat{w}(I)$, $\hat{w}(v)$, $\hat{w}(G)$, $\val$ in the $i$th iteration prior to updating the sequence.
Note then that $H_0 \supseteq H_1 \supseteq H_t \supseteq \cdots$. We also have $\hat{w}_{i+1}(v) = \sum_{k=0}^i \Pr(v\in \bI_{H_k})\val_k$ for any $v\in V(H_i)$ and $\hat{w}_{i+1}(G) = \sum_{k=0}^i \val_k$.

Let us first describe the precise fractional colouring (rather than its sequence of measures) that is constructed during Algorithm~\ref{alg-greedy}.
During the update from $\hat{w}_i$ to $\hat{w}_{i+1}$, in actuality we do the following.
Divide the interval $[\hat{w}_i(G),\hat{w}_i(G)+\val_i)$ into a sequence $(B_I)_{I\in \cI(G)}$ of consecutive right half-open intervals such that $B_I$ has length $\Pr(\bI_{H_i}=I)\val_i$. We then let $w_{i+1}(I) = w_i(I)\cup B_I$ for each $I\in \cI(G)$. 
Note that $\mu(w_i(I)) = \hat{w}_i(I)$ for all $I\in \cI(G)$ and $i$. Moreover, by induction, $w_i(G) \subseteq [0,\hat{w}_i(G))$ for all $i$.

By the choice of $\val_i$, if there is some $v\in V(H_i)$ (i.e.~with $\hat{w}_i(v) < 1$), then $\hat{w}_{i+1}(G) \le \gamma(v)$ and so $w_{i+1}(G) \subseteq [0,\gamma(v))$. So we only need to show that Algorithm~\ref{alg-greedy} terminates. To do so, it suffices to show that $|V(H_{i+1})| < |V(H_i)|$ for all $i$.

If
\[\val_i = \min_{v\in V(H_i)} \frac{1-\hat{w}_i(v)}{\Pr(v\in \bI_{H_i})}\,,
\]
then there must be some $v\in V(H_i)$ such that $\hat{w}_i(v) < 1$ and $\hat{w}_{i+1}(v)=1$, so $|V(H_{i+1})| < |V(H_i)|$ and we are done. We may therefore assume that there is some $v \in V(H_i)$ such that $\val_i = \gamma(v)-\hat{w}_i(G)$, and so $\hat{w}_{i+1}(G)=\gamma(v)$. 

For any $k\in\{0,\dots,i\}$, we know that
\[ \sum_{j=0}^r \alpha_j(v)\EE\big|N_{H_k}^j(v)\cap \bI_{H_k}\big| \geq 1\,,\] and so 
\[ \sum_{j=0}^r \alpha_j(v)\sum_{u\in N^j_{H_k}(v)} \Pr(u\in \bI_{H_k})\val_k \geq \val_k.\]
By summing this last inequality over all such $k$, we obtain
\begin{align}\label{eq:gammaveqs}
\sum_{j=0}^r \alpha_j(v) \big|N^j_G(v)\big| \ge \sum_{j=0}^r \alpha_j(v) \sum_{u\in N^j_G(v)} \hat{w}_{i+1}(u) &\geq \hat{w}_{i+1}(G) = \gamma(v),
\end{align}
and $\gamma(v)$ is defined to be the left-hand side of this chain of inequalities, so we have equality throughout. 
We also note that 
\begin{align*}
	\sum_{j=0}^r \alpha_j(v) \sum_{u\in N^j_G(v)} \hat{w}_{i+1}(u) 
	&= \alpha_0(v)\hat{w}_{i+1}(v) + \sum_{j=1}^r \alpha_j(v) \sum_{u\in N^j_G(v)} \hat{w}_{i+1}(u)
\\&\le \alpha_0(v)\hat{w}_{i+1}(v) + \sum_{j=1}^r \alpha_j(v) \big|N^j_G(v)\big|
\end{align*}
because for any $u$ appearing in the sum on the right-hand side of the first line we have $0\le \hat{w}_{i+1}(u)\le 1$. Indeed, the choice of $\val_i$ in the algorithm ensures that the weights $\hat{w}_{i+1}(u)$ never exceed $1$. 
We then have from~\eqref{eq:gammaveqs} that
\begin{align*}
\alpha_0(v) \hat{w}_{i+1}(v) + \sum_{j=1}^r \alpha_j(v) |N^j_{G}(v)| &\geq \gamma(v) = \alpha_0(v) + \sum_{j=1}^r \alpha_j(v) |N_{G}^j(v)|,
\end{align*}
giving $\alpha_0(v) \hat{w}_{i+1}(v) \geq \alpha_0(v)$ and hence that $\hat{w}_{i+1}(v)=1$. 
This means $|V(H_{i+1})| < |V(H_i)|$, as required for a proof of termination.
\end{proof}

\section{A local analysis of the hard-core model}
\label{sec:hardcore}

Given a graph $G$, and a parameter $\lam>0$, the \emph{hard-core model on $G$ at fugacity $\lam$} is a probability distribution on the independent sets $\cI(G)$ (including the empty set) of $G$, where each $I\in\cI(G)$ occurs with probability proportional to $\lam^{|I|}$. Writing $\bI$ for the random independent set, we have
\[
\Pr(\bI=I) = \frac{\lam^{|I|}}{Z_G(\lam)}\,,
\]
where the normalising term in the denominator is the \emph{partition function} (or independence polynomial) $Z_G(\lam)=\sum_{I\in\cI(G)} \lam^{|I|}$. 

Given a choice of $I\in\cI(G)$, we say that a vertex $u\in V(G)$ is \emph{uncovered} if $N(u)\cap I=\varnothing$, and that $u$ is \emph{occupied} if $u\in I$. 
Note that $u$ can be occupied only if it is uncovered. 

For the rest of this section we assume that $G$ is triangle-free. We note the following useful facts (which appear verbatim in~\cite{davies2017independent,davies2018average}). 

\begin{description}
\item[Fact 1] $\Pr(v\in\bI | v \text{ uncovered}) = \frac{\lam}{1+\lam}$.
\item[Fact 2] $\Pr(v \text{ uncovered} | v \text{ has $j$ uncovered neighbours}) = (1+\lam)^{-j}$.
\end{description}

Fact 1 holds because, for each realisation $J$ of $\bI\setminus\{v\}$ such that $J\cap N(v)=\varnothing$ (i.e.\ $v$ is uncovered), there are two possible realisations of $\bI$, namely $J$ and $J \cup\{v\}$.
Now, $\bI$ takes these values with probabilities proportional to $\lam^{|J|}$ and $\lam^{1+|J|}$ respectively, so for such $J$ we have 
\[
\Pr(v\in\bI ~|~ \bI\setminus\{v\} = J)=\frac{\lam^{1+|J|}}{\lam^{|J|}+\lam^{1+|J|}}\,,
\]
and the fact follows.

Fact 2 holds because, for each realisation $J$ of $\bI \setminus N(v)$ such that $|N(v) \setminus N(J)|=j$, every possible subset of $N(v) \setminus N(J)$ (the uncovered neighbours of $v$) extends $J$ into a valid realisation of $\bI$. Only the empty set extends $J$ into a realisation of $\bI$ where $v$ is uncovered, so we have
\[ \Pr(v \mbox{ uncovered} ~|~ \bI \setminus N(v) = J) = \frac{\lambda^{|J|}}{\sum_{X \subseteq N(v)\setminus N(J)}\limits \lambda^{|X|+|J|}} = \pth{1+\lambda}^{-j}.\]

We apply these facts to give a lower bound on a linear combination of the probability that $v$ is occupied and the expected number of occupied neighbours of $v$. 
This is a slight modification of the arguments of~\cite{davies2017independent,davies2018average}, but here we focus on individual vertices, rather than averaging over a uniformly random choice of vertex. 

\begin{lemma}\label{lem:hcmbound}
Let $G$ be a triangle-free graph and let $(\alpha_v)_{v\in V(G)}$ and $(\beta_v)_{v\in V(G)}$ be sequences of positive real numbers. Write $\bI$ for a random independent set drawn from the hard-core model on $G$ at fugacity $\lam>0$. Then for every $v\in V(G)$, we have
\[ 
\alpha_v \Pr(v\in\bI) + \beta_v\EE|N(v)\cap \bI| \ge \frac{\beta_v\lam  \left(\log(\alpha_v/\beta_v) + \log\log(1+\lam)+1\right)}{(1+\lam) \log(1+\lam)}
\]
\end{lemma}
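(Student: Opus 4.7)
The plan is to reduce the bound to a one-variable pointwise inequality by conditioning on the restriction of $\bI$ to the outside of the closed neighbourhood of $v$. Write $J'=\bI\cap(V(G)\setminus(\{v\}\cup N(v)))$ and $U(J')=\{u\in N(v):N(u)\cap J'=\varnothing\}$. Triangle-freeness is crucial here: it makes $N(v)$ independent and ensures $N(u)\cap N(v)=\varnothing$ for each $u\in N(v)$, so conditionally on $J'$ the distribution of $\bI\cap(\{v\}\cup N(v))$ is exactly the hard-core distribution on the star with centre $v$ and leaves $U(J')$. Writing $y:=|U(J')|$, the conditional partition function is $\lam+(1+\lam)^y$, and an elementary computation (in the spirit of Facts 1 and 2) gives
\[
\Pr(v\in\bI\mid J')=\frac{\lam}{\lam+(1+\lam)^y},\qquad
\EE\bigl[|N(v)\cap\bI|\bigm|J'\bigr]=\frac{y\lam(1+\lam)^{y-1}}{\lam+(1+\lam)^y}.
\]

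Taking expectations over $J'$, the lemma follows by linearity provided the linear combination $\alpha_v\Pr(v\in\bI\mid J')+\beta_v\EE[|N(v)\cap\bI|\mid J']$ is bounded below by the target quantity for every real $y\ge 0$. Substituting $z=(1+\lam)^y$ and $m=\alpha_v L/\beta_v$ (where $L:=\log(1+\lam)$) and clearing positive denominators, this pointwise task collapses to proving
\[
m(1+\lam)+z\log z\ \ge\ (\log m+1)(\lam+z)\qquad\text{for all }m,\lam>0,\ z\ge 1.
\]

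To finish I will decompose $\text{LHS}-\text{RHS}$ as
\[
\lam(m-\log m-1)\ +\ (z\log z-z\log m-z+m)
\]
and argue that each summand is separately nonnegative. The first is the classical inequality $\log m\le m-1$. For the second, $h(z):=z\log z-z\log m-z+m$ satisfies $h'(z)=\log(z/m)$ and $h''(z)=1/z>0$, so $h$ is convex and minimised at $z=m$ with $h(m)=0$. The main obstacle I anticipate is locating the right substitution and decomposition: the bound in the lemma is not pointwise tight (equality in both summands forces $m=z=1$, a very particular regime), so the target expression cannot be read off from simply optimising the conditional expression in $y$. Once one writes the reduced inequality in terms of $m=\alpha_v L/\beta_v$, however, the split into the two standard inequalities is essentially forced, and taking expectation over $J'$ concludes the proof.
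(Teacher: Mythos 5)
Your proof is correct, and it takes a genuinely different route from the paper's. The paper conditions on the number $\bZ$ of uncovered neighbours of $v$ (via its Facts 1 and 2), obtains $\Pr(v\in\bI)=\frac{\lam}{1+\lam}\EE\big[(1+\lam)^{-\bZ}\big]$ and $\EE|N(v)\cap\bI|=\frac{\lam}{1+\lam}\EE\bZ$, applies Jensen's inequality to the first identity, and then minimises $g(z)=\alpha_v(1+\lam)^{-z}+\beta_v z$ over real $z$ by calculus; the right-hand side of the lemma is precisely $\frac{\lam}{1+\lam}g(z^*)$. You instead condition on the whole configuration $J'$ outside $N[v]$, compute the conditional law exactly (the hard-core model on a star with $y=|U(J')|$ leaves, partition function $\lam+(1+\lam)^y$ --- I checked your two conditional formulas and they are right), and establish the bound \emph{pointwise} for every outside configuration, so that averaging is trivial and Jensen is never invoked. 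Your change of variables and the reduced inequality $m(1+\lam)+z\log z\ge(\log m+1)(\lam+z)$ are correct, and the split into $\lam(m-\log m-1)\ge 0$ plus the convex function $h(z)=z\log z-z\log m-z+m$ with minimum $h(m)=0$ closes it cleanly (note you only need integer $y$, but proving it for all $z\ge 1$ costs nothing). What your approach buys is a strictly stronger intermediate statement (a conditional rather than an averaged bound) proved from two textbook inequalities; what the paper's approach buys is that the constant on the right-hand side emerges naturally as the value of an optimisation --- a relationship that is then inverted in the proof of Theorem~\ref{thm:fracmolloy} to choose $\alpha_v$ and $\beta_v$ --- whereas, as you observe yourself, your pointwise bound is not tight and so the target expression must be known in advance rather than discovered.
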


\begin{proof}
Fix a vertex $v\in V(G)$ and let $\bZ$ be the number of uncovered neighbours of $v$ given the random independent set $\bI$. 
By Fact 1, conditioning on the number of uncovered neighbours of $v$, and by Fact 2, we have 
\begin{align}
\Pr(v\in\bI) 
   &= \frac{\lam}{1+\lam}\Pr(v\text{ uncovered})
\\ &= \frac{\lam}{1+\lam}\sum_{j\ge 0}(1+\lam)^{-j}\cdot\Pr(v \text{ has $j$ uncovered neighbours})
\\ &= \frac{\lam}{1+\lam}\EE\big[(1+\lam)^{-\bZ}\big] \ge \frac{\lam}{1+\lam}(1+\lam)^{-\EE \bZ}\,,
\end{align}
where for the final inequality we used Jensen's inequality. 
Similarly, each of the $\bZ$ uncovered neighbours of $v$ is occupied with probability $\lam/(1+\lam)$ independently of the others (since $G$ is triangle-free), and a covered neighbour of $v$ is occupied with probability zero. Hence
\[
\EE|N(v)\cap\bI| = \frac{\lam}{1+\lam}\EE\bZ.
\]
Then for any vertex $v\in V(G)$ and positive reals $\alpha_v$ and $\beta_v$ we have
\begin{align}
	\alpha_v \Pr(v\in\bI) + \beta_v\EE|N(v)\cap \bI| 
	  &\ge \frac{\lam}{1+\lam}\Big(\alpha_v(1+\lam)^{-\EE \bZ} + \beta_v\EE\bZ\Big)\,,
\end{align}
and since $\EE\bZ$ is some (non-negative) real number we also have
\begin{align}
	\alpha_v \Pr(v\in\bI) + \beta_v\EE|N(v)\cap \bI| 
	  &\ge \frac{\lam}{1+\lam}\min_{z\in \REALS}\Big\{\alpha_v(1+\lam)^{-z} + \beta_v z\Big\}\,.
\end{align}
Let $g(z) := \alpha_v(1+\lam)^{-z} + \beta_v z$. 
When $\alpha_v,\lam>0$ the function $g$ is strictly convex (because its second derivative is positive), and hence has a unique stationary point at $z=z^*$, say, which gives its minimum. 
We compute that 
\[ g'(z^*) = \beta_v -\alpha_v\frac{\log(1+\lam)}{(1+\lam)^{z^*}} = 0 \Longleftrightarrow z^* = \frac{\log(\alpha_v/\beta_v) + \log\log(1+\lam)}{\log(1+\lam)}\,, \]
showing that for every vertex $v\in V(G)$ we have 
\begin{align}
\alpha_v \Pr(v\in\bI) + \beta_v\EE|N(v)\cap \bI| &\ge g(z^*) 
\\&= \frac{\beta_v\lam  \left(\log(\alpha_v/\beta_v) + \log\log(1+\lam)+1\right)}{(1+\lam) \log(1+\lam)}\,.\qedhere
\end{align}
\end{proof}

We next give a result related to a recent conjecture of Esperet, Thomass\'e and the third author~\cite[Conj.~1.5]{esperet2018separation}.
A \emph{semi-bipartite induced subgraph} of a graph $G$ is a subgraph $H$ of $G$ consisting of all edges between two disjoint subsets $A,B\subset V(G)$ such that $A$ is independent. 
This definition means that average degree of such a semi-bipartite induced subgraph $H$ is $\frac{2}{|A|+|B|}e_G(A,B)$, where $e_G(A,B)$ represents the number of edges of $G$ with one endpoint in $A$ and one endpoint in $B$.
Our local analysis of the hard-core model in triangle-free graphs yields a semi-bipartite induced subgraph of high average degree, measured by a property of $G$ that incorporates local degree information: the geometric mean of the degree sequence. 
We improve upon~\cite[Thm.~3.5]{esperet2018separation} by replacing minimum degree with the geometric mean of the degrees, and increasing the leading constant. 

\begin{theorem}\label{thm:semibipartite}
A triangle-free graph $G$ on $n$ vertices contains a semi-bipartite induced subgraph of average degree at least $(2+o(1)) \frac{1}{n}\sum_{v\in V(G)}\log \deg(v)$.
\end{theorem}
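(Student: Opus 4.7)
My plan is to apply Lemma~\ref{lem:hcmbound} to a hard-core sample $\bI$ on $G$ at suitable fugacity $\lam$, and show that the semi-bipartite induced subgraph with $A = \bI$ and $B = V(G)\setminus\bI$ has expected average degree at least $(2+o(1))L$, where $L := \frac{1}{n}\sum_v\log\deg(v)$. Because $\bI$ is independent every edge leaving $\bI$ enters $B$, so $e_G(\bI, V(G)\setminus\bI) = \sum_{v\in\bI}\deg(v)$, while $|A|+|B|=n$ is deterministic. Hence the expected average degree of this subgraph is exactly $\frac{2}{n}\sum_v\deg(v)\Pr(v\in\bI)$, and it suffices to lower-bound this by $(2+o(1))L$ in expectation.

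The crux is to apply Lemma~\ref{lem:hcmbound} at each $v$ with the pair $\alpha_v = (1-\beta)\deg(v)$ and $\beta_v = \beta$, where $\beta\in(0,1)$ is a parameter to be chosen. Summing over $v$, the left-hand side becomes $(1-\beta)\sum_v\deg(v)\Pr(v\in\bI) + \beta\sum_v\EE|N(v)\cap\bI|$, and the double-counting identity $\sum_v\EE|N(v)\cap\bI| = \sum_v\deg(v)\Pr(v\in\bI)$ then collapses this \emph{exactly} to $\sum_v\deg(v)\Pr(v\in\bI)$. This convex-combination split of the weight between $\Pr(v\in\bI)$ and $\EE|N(v)\cap\bI|$ is precisely what avoids the factor-of-two loss one would incur with a symmetric choice such as $\alpha_v = \deg(v)$, $\beta_v = 1$. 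Meanwhile the right-hand side sums to $\frac{\beta\lam}{(1+\lam)\log(1+\lam)}\bigl[\sum_v\log\deg(v) + n\bigl(\log\frac{1-\beta}{\beta}+\log\log(1+\lam)+1\bigr)\bigr]$.

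Taking $\lam\to 0$ drives $\frac{\lam}{(1+\lam)\log(1+\lam)}\to 1$, and combined with $\beta\to 1$ this pushes the average-degree prefactor up to $2$. The additive terms $\log\frac{1-\beta}{\beta}$ and $\log\log(1+\lam)$ both diverge to $-\infty$ in these limits, but they are graph-independent, so after normalising by $n$ they contribute only an $O_{\beta,\lam}(1)$ correction to the average degree, which is $o(L)$ provided $L\to\infty$ (the natural regime for an $o(1)$ statement of this form, e.g.\ min degree growing). The main obstacle is the joint calibration: for each target $\eps>0$ I must pick $\beta$ near $1$ and $\lam$ near $0$ so that simultaneously the prefactor exceeds $2-\eps$ and the logarithmic error terms are absorbed into $L$; a convenient concrete choice is $\beta = 1-\eta$, $\lam = \eta$ with $\eta = \eta(\eps)$ small. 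Finally, passing from the expectation bound to a specific realisation of $\bI$ -- which is valid because $|A|+|B|=n$ is deterministic, so the average degree is a linear function of $\mathbf{1}_{v\in\bI}$ -- produces the desired semi-bipartite subgraph.
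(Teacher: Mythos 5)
Your proposal is correct and follows essentially the same route as the paper: take $A=\bI$ from the hard-core model, use the double-counting identity $\sum_v\EE|N(v)\cap\bI|=\sum_v\deg(v)\Pr(v\in\bI)$, and apply Lemma~\ref{lem:hcmbound} with $\alpha_v$ proportional to $\deg(v)$ and $\beta_v$ constant so that the asymmetric weighting avoids the factor-of-two loss. The only cosmetic difference is your normalisation $\alpha+\beta=1$ with a fixed small $\eta(\eps)$, where the paper takes $\alpha/\beta=\lam=n/\sum_v\log\deg(v)$; both calibrations give the same $(2+o(1))$ constant.
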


In the statement of the theorem and in the proof below, the $o(1)$ term tends to zero as the geometric mean of the degree sequence of $G$ tends to infinity.

\begin{proof}[Proof of Theorem~\ref{thm:semibipartite}]
We find a semi-bipartite induced subgraph of $G$ where one of the parts is a random independent set $\bI$ from the hard-core model, and the other is $V(G)\setminus \bI$. 
The number of edges between the parts is therefore $e_G(\bI,V(G)\setminus\bI) = \sum_{v\in \bI}\deg(v)$, which is a random variable we denote $\bX$. 
We write $\EE\bX$ in two different ways: 
\begin{align}
\EE\bX
  &= \sum_{v\in V(G)}\deg(v)\Pr(v\in \bI) = \sum_{v\in V(G)}\EE|N(v)\cap\bI|.
\end{align}
The first version follows from linearity of expectation, and for the second we note that $\EE|N(v)\cap\bI|=\sum_{u\in N(v)}\Pr(u\in\bI)$ and hence $\Pr(u\in\bI)$ appears $\deg(u)$ times in the sum as required. 
For brevity, we write $\sum_v$ for a sum over $v\in V(G)$ in the rest of the proof. 
Then for any $\alpha, \beta>0$ we have 
\[
(\alpha+\beta)\EE\bX=\sum_v\Big(\alpha \deg(v)\Pr(v\in\bI)+\beta\EE|N(v)\cap\bI|\Big)\,,
\]
hence by Lemma~\ref{lem:hcmbound}, 
\[
\EE\bX \ge \frac{n\lam  \left(\frac{1}{n}\sum_v\log \deg(v)+\log(\alpha/\beta) + \log\log(1+\lam)+1\right)}{(1+\alpha/\beta)(1+\lam) \log(1+\lam)}.
\]
Choosing e.g.\ $\alpha/\beta = \lam = n/\sum_v \log \deg(v)$, we observe that 
\[
\EE\bX\ge (1+o(1))\sum_v\log \deg(v).
\]
To complete the proof, note that the bound on $\EE\mathbf X$ means that there is at least one independent set $I$ with at least $(1+o(1))\sum_v\log \deg(v)$ edges from $I$ to its complement. 
This immediately means that the average degree of the semi-bipartite subgraph with parts $I$ and $V(G)\setminus I$ is at least $(2+o(1))\frac{1}{n}\sum_v\log \deg(v)$. 
\end{proof}

We remark that the methods of~\cite{davies2018average} deal with the quantities $\Pr(v\in\bI)$ and $\EE|N(v)\cap\bI|$ in a slightly more sophisticated manner that avoids the seemingly arbitrary parameter $\alpha/\beta$ in the above proof. 
Since we have Lemma~\ref{lem:hcmbound} for other purposes in this paper, it is expedient to use it here.

\section{Local fractional colouring}
\label{sec:fracmolloy}

\begin{proof}[Proof of Theorem~\ref{thm:fracmolloy}]
The method is to combine Lemmas~\ref{lem:chifalg} and~\ref{lem:hcmbound} by carefully choosing $(\alpha_v)_{v\in V(G)}$ and $(\beta_v)_{v\in V(G)}$. 
For every $v\in V(G)$, we want to minimise $\alpha_v+\beta_v\deg(v)$ subject to the condition
\begin{align}\label{eq:ab1}
\frac{\beta_v\lam  \left(\log(\alpha_v/\beta_v) + \log\log(1+\lam)+1\right)}{(1+\lam) \log(1+\lam)} = 1.
\end{align}
For then the hypothesis of Lemma~\ref{lem:chifalg} (with $\alpha_0(v)=\alpha_v$ and $\alpha_1(v)=\beta_v$ for all $v\in V(G)$) follows from the conclusion of Lemma~\ref{lem:hcmbound}. 
Given the assumptions on $G$, we can apply Lemma~\ref{lem:hcmbound} to any induced subgraph $H$ of $G$ since such $H$ are also triangle-free and the local parameters $\alpha_v$ and $\beta_v$ are invariant under taking induced subgraphs.

Note that~\eqref{eq:ab1} is equivalent to
\[
\alpha_v = \frac{\beta_v  (1+\lam)^{\frac{1+\lam}{\beta_v  \lam }}}{e\log(1+\lam)}\,,
\]
so that $\alpha_v+\beta_v\deg(v)$ is a convex function of $\beta_v$ with a minimum at 
\[
\beta_v = \frac{1+\lam}{\lam} \cdot \frac{\log (1+\lam)}{1 +  W\big(\deg(v)  \log (1+\lam)\big)}\,,
\]
giving 
\[
\alpha_v+\beta_v\deg(v) = \frac{1+\lam}{\lam} \cdot e^{W(\deg(v)  \log (1+\lam))}.
\]
For any fixed $\lam$ this is an increasing function of $\deg(v)$. We take $\lam=\eps/2$, and we are done by Lemma~\ref{lem:chifalg} if we can show that there exists $\delta>0$ such that for all $\deg(v)\ge \delta$ we have
\begin{align}\label{eq:done}
(2/\eps+1) \cdot e^{W(\deg(v) \log (1+\eps/2))} \le (1+\eps)\frac{\deg(v)}{\log \deg(v)}\,.
\end{align}
Let us first assume that $\deg(v)$ is at least some large enough multiple of $1/\eps$ so that
\[
e^{W(\deg(v) \log (1+\eps/2))} \le \frac{(1+\eps/2) \deg(v) \log (1+\eps/2)}{\log(\deg(v) \log (1+\eps/2))},
\]
where we used the fact that $W(x)=\log x - \log\log x + o(1)$ as $x\to\infty$.
Then by~\eqref{eq:done}, it suffices to have that
\[
(2/\eps+1)(1+\eps/2)\log (1+\eps/2) \cdot \log \deg(v) \le (1+\eps)\log(\deg(v) \log (1+\eps/2)).
\]
This last inequality holds for $\deg(v)$ large enough (as a function of $\eps$) provided
\[
(2/\eps+1)(1+\eps/2)\log (1+\eps/2) < 1+\eps.
\]
This is easily checked to hold true for small enough $\eps$, namely $\eps \le 4$.
\end{proof}

\section{A list colouring lemma}
\label{sec:finishingblow}

Just as in~\cite{Ber19}, we will establish Theorem~\ref{thm:localmolloy} for a generalised form of list colouring called {\em correspondence colouring} (or {\em DP-colouring}). We here state the definition given in~\cite{Ber19}.

		Given a graph $G$, a \emph{cover} of $G$ is a pair $\sH = (L, H)$, consisting of a graph $H$ and a function $L \colon V(G) \to 2^{V(H)}$, satisfying the following requirements:
		\begin{enumerate}
			\item the sets $\{L(u) \,:\,u \in V(G)\}$ form a partition of $V(H)$;
			\item for every $u \in V(G)$, the graph $H[L(u)]$ is complete;
			\item if $E_H(L(u), L(v)) \neq \varnothing$, then either $u = v$ or $uv \in E(G)$;
			\item \label{item:matching} if $uv \in E(G)$, then $E_H(L(u), L(v))$ is a matching (possibly empty).
		\end{enumerate}
		An \emph{$\sH$-colouring} of $G$ is an independent set in $H$ of size $|V(G)|$.

A reader who prefers not to concern herself with this generalised notion may merely read $L$ as an ordinary list assignment and $V(H)$ as the disjoint union of all lists. For usual list colouring, there is an edge in $H$ between equal colours of two lists if and only if there is an edge between their corresponding vertices in $G$.

To state and prove our local version of the finishing blow, we will need some further notation.
Define $H^*$ to be the spanning subgraph of $H$ such that an edge $c_1c_2 \in E(H)$ belongs to $E(H^*)$ if and only if $c_1$ and $c_2$ are in different parts of the partition $\{L(u)\,:\, u \in V(G)\}$. We write $\deg^*_{\sH}(c)$ instead of $\deg_{H^*}(c)$.

\begin{lemma}\label{lem:finishingblow}
Let $\sH = (L,H)$ be a cover of a graph $G$. Suppose there is a function $\ell : V(G) \to \EZ_{\ge 3}$, such that, for all $u\in V(G)$, $|L(u)| \ge \ell(u)$ and $\deg^*_\sH(c) \le \tfrac18 \min_{v\in N_G(u)} \ell(v)$ for all $c \in L(u)$. Then $G$ is $\sH$-colourable. 
\end{lemma}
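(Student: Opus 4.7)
The plan is to apply the General Lov\'asz Local Lemma to a uniformly random colour assignment. For each $u \in V(G)$, independently sample $\phi(u) \in L(u)$ uniformly at random; the resulting set $\{\phi(u) : u \in V(G)\}$ is an $\sH$-colouring precisely when no edge of $H^*$ has both endpoints chosen, since within each $L(u)$ only one colour is selected. For each $e = c_1c_2 \in E(H^*)$, with $c_1 \in L(u)$ and $c_2 \in L(v)$ (so $uv \in E(G)$ by property~(iii)), introduce the bad event $A_e = \{\phi(u) = c_1,\, \phi(v) = c_2\}$, of probability exactly $1/(|L(u)||L(v)|)$. Since $A_e$ depends only on $\phi(u)$ and $\phi(v)$, its dependency set $\mathcal{D}_e$ may be taken as the events $A_{e'}$, $e' \ne e$, with at least one endpoint in $L(u) \cup L(v)$.

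I would then run GLLL with weights $x_e = 2/(|L(u)||L(v)|)$. The hypothesis $\ell \ge 3$ forces $x_e \le 2/9$, so the elementary inequality $1 - x \ge 4^{-x}$ applies on each factor. The crux is bounding $\sum_{e' \in \mathcal{D}_e} x_{e'}$. For edges $e'$ with one endpoint $c \in L(u)$, the other endpoint lies in some $L(w)$ with $w \in N_G(u)$ by property~(iii); summing over $c \in L(u)$ and its $H^*$-neighbours gives
\[
\sum_{c \in L(u)} \sum_{c' \in N_{H^*}(c)} \frac{2}{|L(u)| \cdot |L(w_{c'})|} \le \frac{2}{|L(u)| \cdot \min_{v \in N_G(u)} \ell(v)} \sum_{c \in L(u)} \deg^*_\sH(c) \le \tfrac{1}{4},
\]
using $|L(w_{c'})| \ge \ell(w_{c'}) \ge \min_{v \in N_G(u)} \ell(v)$ and the hypothesis $\deg^*_\sH(c) \le \tfrac18 \min_{v \in N_G(u)} \ell(v)$. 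By symmetry the contribution from $L(v)$ is also at most $1/4$, hence $\sum_{e' \in \mathcal{D}_e} x_{e'} \le 1/2$.

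Putting it together, $x_e \prod_{e' \in \mathcal{D}_e} (1 - x_{e'}) \ge x_e \cdot 4^{-1/2} = 1/(|L(u)||L(v)|) = \Pr(A_e)$, which is exactly the GLLL hypothesis; thus with positive probability no $A_e$ occurs, producing an $\sH$-colouring. The main subtlety is calibrating the weights against the constant in the hypothesis: the factor $1/8$ is tight for this argument, yielding exactly $1/4$ on each of the two sides $L(u)$ and $L(v)$ and just enough room in $4^{-\sum x_{e'}} \ge 1/2$ to cancel the factor of $2$ inserted into $x_e$. A weaker constant in the hypothesis would force either a larger $x_e$ (breaking the product bound) or a finer handling of the bipartite matching structure of $E_H(L(u),L(w))$ than seems available from the assumptions.
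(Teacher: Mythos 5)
Your proof is correct and follows essentially the same route as the paper's: a uniform random choice from each list, bad events indexed by edges of $H^*$ with probability $(|L(u)||L(v)|)^{-1}$, and the General Lov\'asz Local Lemma with weights proportional to $(|L(u)||L(v)|)^{-1}$, where the hypothesis $\deg^*_\sH(c)\le\tfrac18\min_{v\in N_G(u)}\ell(v)$ yields the same bound of $\tfrac14$ per side on the dependency sum. The only differences are cosmetic choices of constants (you take $k=2$ with $1-x\ge 4^{-x}$ on $[0,\tfrac12]$, the paper takes $k=3$ with $1-x\ge e^{-1.4x}$, and the paper first truncates each list to size exactly $\ell(u)$), and your verification of each step, including the role of $\ell\ge 3$, is sound.
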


For clarity, we separately state the corollary this lemma has for conventional list colouring.

\begin{corollary}\label{cor:finishingblow}
Let $L : V(G)\to 2^{\EZ^+}$ be a list assignment of a graph $G$. Suppose there is a function $\ell : V(G) \to \EZ_{\ge 3}$ such that, for all $u\in V(G)$, $|L(u)| \ge \ell(u)$ and the number of neighbours $v\in N_G(u)$ for which $L(v)\ni c$ is at most $\tfrac18 \min_{v\in N_G(u)} \ell(v)$ for all $c \in L(u)$.
Then there exists a proper colouring $c: V(G) \to \EZ^+$ of $G$ such that $c(u) \in L(u)$ for all $u\in V(G)$. 
\end{corollary}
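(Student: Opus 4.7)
The plan is to apply the Lov\'asz Local Lemma to a uniformly random colour assignment. I would independently sample $c(u)\in L(u)$ uniformly for each $u\in V(G)$, observing that the result is an $\sH$-colouring precisely when no edge of $H^*$ has both endpoints selected. So for each edge $e\in E(H^*)$ with endpoints $a\in L(u)$ and $b\in L(v)$ (necessarily $uv\in E(G)$), let $A_e$ be the bad event that $c(u)=a$ and $c(v)=b$; then $\Pr(A_e)=1/(|L(u)||L(v)|)$, and $A_e$ is mutually independent of every family of events whose edges have no endpoint in $L(u)\cup L(v)$.

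Next, I would set $x_e := 4/(|L(u)||L(v)|)\in[0,4/9]$ (using $\ell\ge 3$) and bound the dependency sum $\sum_{e'\in\cD(e)} x_{e'}$ by splitting according to whether $e'$ has an endpoint in $L(u)$ or $L(v)$. Write $\ell^*_u := \min_{w\in N_G(u)}\ell(w)$, and for a colour $c'$ let $w(c')$ denote the $G$-vertex with $c'\in L(w(c'))$. Since every $H^*$-neighbour $c'$ of $c\in L(u)$ satisfies $w(c')\in N_G(u)$ and hence $|L(w(c'))|\ge \ell^*_u$, the colour-level hypothesis $\deg^*_\sH(c)\le \ell^*_u/8$ gives
\[
\sum_{c\in L(u)}\sum_{c'\in N_{H^*}(c)} \frac{4}{|L(u)|\,|L(w(c'))|} \le \frac{4}{|L(u)|\,\ell^*_u}\sum_{c\in L(u)} \deg^*_\sH(c) \le \tfrac12.
\]
A symmetric calculation for $L(v)$ yields $\sum_{e'\in\cD(e)} x_{e'}\le 1$.

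Since each $x_{e'}\le 4/9<1/2$, the elementary estimate $1-x\ge 4^{-x}$, valid on $[0,1/2]$, applies; this gives $\prod_{e'\in\cD(e)}(1-x_{e'})\ge 4^{-1}=1/4$ and hence $x_e\prod_{e'\in\cD(e)}(1-x_{e'})\ge 1/(|L(u)||L(v)|)=\Pr(A_e)$, which is precisely the LLL hypothesis. The LLL then yields positive probability of avoiding every $A_e$, producing an $\sH$-colouring of $G$. The main obstacle is organising the dependency-sum computation so as to exploit the sharper colour-level bound $\deg^*_\sH(c)\le \ell^*_u/8$ rather than a cruder $G$-vertex degree estimate; the constants $\tfrac18$ and $\ell\ge 3$ appear to be precisely tuned so that the sum bound $\sum x_{e'}\le 1$, the product estimate $1-x\ge 4^{-x}$, and the LLL inequality all close simultaneously.
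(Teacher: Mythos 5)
Your proof is correct and follows essentially the same route as the paper: the same bad events $A_e$ on edges of $H^*$, weights proportional to $(|L(u)||L(v)|)^{-1}$, a dependency sum bounded via the colour-degree hypothesis, and the General Lov\'asz Local Lemma. The only differences are cosmetic choices of constants (you take $k=4$ with the estimate $1-x\ge 4^{-x}$ on $[0,\tfrac12]$, where the paper takes $k=3$ with $e^{-1.4x}\le 1-x$), and both verifications close correctly given $\ell\ge 3$.
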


\begin{proof}[Proof of Lemma~\ref{lem:finishingblow}]
Remove, if needed, some vertices from $H$ to ensure that $|L(u)|=\ell(u)$ for all $u\in V(G)$.
Let $\bI$ be a random subset of $V(H)$ obtained by choosing, independently and uniformly, one vertex from each list $L(u)$.
For $c_1c_2\in E(H^*)$, let $B_{c_1c_2}$ denote the event that both $c_1$ and $c_2$ are chosen in $\bI$. So, if none of the events $B_{c_1c_2}$ occur, then $\bI$ is an independent set and hence an $\sH$-colouring. Let $u_i$ be the vertex of $G$ such that $c_i \in L(u_i)$, for $i\in \{1,2\}$. By definition, $\Pr(B_{c_1c_2})=(\ell(u_1)\ell(u_2))^{-1}$. Define
\[
\Gamma(c_1c_2) = \{c_1'c_2' \in E(H^*) \, : \, c_1'\in L(u_1)\text{ or }c_2'\in L(u_2) \}.
\]
Note that $B_{c_1c_2}$ is mutually independent of the events $B_{c_1'c_2'}$ with $c_1'c_2' \notin \Gamma(c_1c_2)$.
All that remains is to define weights $x_{c_1c_2} \in [0,1)$ to satisfy the hypothesis of the General Lov\'asz Local Lemma.
In particular, we need that
\begin{align*}
(\ell(u_1)\ell(u_2))^{-1}=\Pr(B_{c_1c_2}) \le x_{c_1c_2} \prod_{c_1'c_2'\in \Gamma(c_1c_2)} (1-x_{c_1'c_2'}).
\end{align*}
Since $\exp(-1.4x) \le 1 - x$ if $0 \le x < 0.5$, it suffices to find weights $x_{c_1c_2} \in [0,0.5)$ satisfying
\begin{align}\label{eqn:glllhyp1}
(\ell(u_1)\ell(u_2))^{-1} \le x_{c_1c_2} \exp\left(-1.4\sum_{c_1'c_2'\in \Gamma(c_1c_2)}x_{c_1'c_2'}\right).
\end{align}
If we choose weights of the form $x_{c_1c_2} = k(\ell(u_1)\ell(u_2))^{-1}$ for some constant $k>0$, then~\eqref{eqn:glllhyp1} becomes
\begin{align*}
\log k \ge 1.4 k\sum_{c_1'c_2'\in \Gamma(c_1c_2)}(\ell(u_1')\ell(u_2'))^{-1}
\end{align*}
(where $u'_i$ is such that $c'_i \in L(u'_i)$, for $i\in \{1,2\}$).

Now note that
\begin{align*}
&\sum_{c_1'c_2'\in \Gamma(c_1c_2)}(\ell(u_1')\ell(u_2'))^{-1} \\
& \le \sum_{c_1'\in L(u_1)} \frac{\deg^*_\sH(c_1')}{\ell(u_1)\min_{v\in N_G(u_1)} \ell(v)} + \sum_{c_2'\in L(u_2)} \frac{\deg^*_\sH(c_2')}{\ell(u_2)\min_{v\in N_G(u_2)} \ell(v)}
 \le 1/4,
\end{align*}
by the assumption on $\deg^*_\sH$. So~\eqref{eqn:glllhyp1} is fulfilled if there is $k>0$ such that
\[
\log k \ge 0.35k \text{ and } k(\ell(u_1)\ell(u_2))^{-1} < 0.5 \text{ for all }u_1, u_2\in V(G).
\]
Noting the lower bound condition on $\ell$, the choice $k= 3$ is enough.
\end{proof}

\section{Local list colouring}
\label{sec:localmolloy}

In this section, we prove Theorem~\ref{thm:localmolloy}. Let us remark that an alternative to the following derivation would be to similarly follow Molloy's original proof and apply Corollary~\ref{cor:finishingblow}. We will sketch a proof of the following stronger form of Theorem~\ref{thm:localmolloy}.

\begin{theorem}\label{thm:localmolloyDP}
Fix $\eps > 0$, let $\Delta$ be sufficiently large, and let $\delta=(192\log \Delta)^{2/\eps}$. Let $G$ be a triangle-free graph of maximum degree $\Delta$ and $\sH = (L,H)$ be a cover of $G$ such that 
\[|L(u)| \ge (1+\eps)\max\Bigg\{\frac{\deg(u)}{\log \deg(u)},\,\frac{\delta}{\log \delta}\Bigg\}\]
for all $u\in V(G)$. Then $G$ is $\sH$-colourable.
\end{theorem}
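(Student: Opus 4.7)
The plan is to mirror Bernshteyn's~\cite{Ber19} proof of the correspondence-colouring form of Molloy's theorem, substituting our local Lemma~\ref{lem:finishingblow} for the uniform finishing blow he uses, and checking that his wasteful-colouring analysis still delivers the hypothesis of the local blow. As already remarked in the introduction, the point is that Bernshteyn's argument has ample slack, so it can tolerate the vertex-by-vertex list-size targets demanded by our theorem.

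First I would trim each list so that $|L(u)|=\ell(u) := \lceil (1+\eps)\max\{\deg(u)/\log\deg(u),\,\delta/\log\delta\}\rceil$, by deleting vertices of $H$; this can only make the problem harder. Next I would perform a single round of Bernshteyn's random colouring step, modified so that the rate at which colours are ``activated'' at each vertex depends mildly on the degree of that vertex, in order to balance the per-vertex activation rate across vertices of different degrees. Invoking Talagrand's inequality and the General Lov\'asz Local Lemma in the same way as in~\cite{Ber19}, one would obtain a partial $\sH$-colouring such that the residual cover $\sH' = (L',H')$ on the uncoloured vertices satisfies, for every uncoloured $u\in V(G)$,
\[
|L'(u)| \ge (1+\tfrac\eps2)\,\frac{\ell(u)}{\log\deg(u)}
\qquad\text{and}\qquad
\max_{c\in L'(u)}\deg^*_{\sH'}(c) \le 24\log\Delta.
\]
The first inequality is Bernshteyn's main list-size estimate, now stated per vertex against its local parameter $\ell(u)$; the second follows from standard concentration of the residual $H^*$-degree, whose expectation is $O(\log \Delta)$.

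With these two bounds in hand, Lemma~\ref{lem:finishingblow} applies to $\sH'$ provided $24\log\Delta \le \tfrac18 |L'(v)|$ for every uncoloured $v$, equivalently $|L'(v)| \ge 192\log\Delta$. By the first bound and the hypothesis $\ell(v) \ge (1+\eps)\delta/\log\delta$, one has $|L'(v)| \ge \delta^{1-\eps/3}$ for $\Delta$ (and hence $\delta$) sufficiently large, and with $\delta = (192\log\Delta)^{2/\eps}$ we get $\delta^{1-\eps/3} \ge 192\log\Delta$ with room to spare. Thus the local finishing blow produces an $\sH'$-colouring, extending the partial colouring from the previous step to an $\sH$-colouring of $G$.

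The main obstacle is verifying that Bernshteyn's concentration-plus-Local-Lemma argument delivers the required localised guarantees: each bad event's failure probability must be forced below the Local Lemma threshold in terms only of the local parameters at the associated vertex, rather than the global maximum degree. The dependency graph is not an issue, since it is determined by the structure of $G$ alone. For the probabilities, the point is that the failure probability of each bad event at a vertex $u$ in Bernshteyn's analysis is at most $\deg(u)^{-\omega(1)}$, which is small enough for the Local Lemma weighting once $\deg(u) \ge \delta$; at vertices of smaller degree the list already sits at the ``$\delta/\log\delta$'' floor, so the same concentration bounds apply with $\delta$ in place of $\deg(u)$. Once these per-vertex bounds are in hand, the rest of the argument is a book-keeping exercise.
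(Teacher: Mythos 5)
Your overall strategy coincides with the paper's: run one round of Bernshteyn's analysis with local parameters and finish with Lemma~\ref{lem:finishingblow}. But your key quantitative claim is wrong, and the error propagates. After the random step, a colour $c\in L(u)$ survives only if none of the (at most $\deg(u)$) colours matched to it is chosen, so by Jensen the expected residual list size is about
\[
|L(u)|\exp\!\left(-\frac{\deg(u)}{|L(u)|}\right)\;=\;(1+\eps)\,\frac{\deg(u)^{1-1/(1+\eps)}}{\log\deg(u)}\;\approx\;\deg(u)^{\eps/(1+\eps)},
\]
i.e.\ a small \emph{power} of $\deg(u)$, not the quantity $(1+\tfrac\eps2)\ell(u)/\log\deg(u)\approx\deg(u)/(\log\deg(u))^2$ you assert; your "first inequality" overstates the surviving list by a polynomial factor and is simply false for this process. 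Consequently your conclusion $|L'(v)|\ge\delta^{1-\eps/3}$, and the claim that $\delta=(192\log\Delta)^{2/\eps}$ leaves "room to spare", are unfounded. In fact the paper's residual target is $\ell(u)=\max\{\deg(u)^{\eps/2},\delta^{\eps/2}\}$ (using $1-1/(1+\eps)>\eps/2$), and $\delta$ is calibrated \emph{exactly} so that $\delta^{\eps/2}=192\log\Delta$, which is precisely the threshold $8\times 24\log\Delta$ needed by the finishing blow; there is no spare room, and this is also why $\delta$ must grow with $\Delta$.

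Two further points. First, Bernshteyn's step is not a wasteful colouring procedure with tunable "activation rates" analysed by Talagrand; it takes a \emph{uniformly} random independent subset of the neighbourhood lists and uses a Chernoff bound for negatively correlated variables, and no degree-dependent modification of the randomness is needed --- the local list sizes enter only through the Jensen computation above. Second, your worry about making the Local Lemma failure probabilities local is resolved differently from how you suggest: the paper does \emph{not} localise them. It bounds each failure probability by $\exp(-\ell(u)/4)\le\Delta^{-48}\le\Delta^{-3}/8$ uniformly, which is only possible because the floor $\delta$ forces $\ell(u)\ge192\log\Delta$ for every vertex; the dependency degree is then controlled by $\Delta$ globally, exactly as in Bernshteyn's proof. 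So the architecture of your argument is right, but the central estimate must be replaced by the $k(u)e^{-\deg(u)/k(u)}$ computation and the arithmetic redone accordingly.
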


We will need further notation.
Given a cover $\sH = (L,H)$, the \emph{domain} of an independent set $I$ in $H$ is $\dom(I) = \{u \in V(G)\,:\, I\cap L(u) \neq \varnothing\}$.  Let $G_I = G - \dom(I)$ and let $\sH_I = (L_I, H_I)$ denote the cover of $G_I$ defined by
	\[
	H_I = H - N_H[I] \qquad\text{and}\qquad L_I(u) = L(u) \setminus N_H(I) \text{ for all } u \in V(G_I).
	\]
	Note that, if $I'$ is an $\sH_I$-colouring of $G_I$, then $I \cup I'$ is an $\sH$-colouring of $G$.

For the rest of this section, fix $0<\eps<1$, $\Delta$, $\delta$, $G$, and $\sH$ to satisfy the conditions of Theorem~\ref{thm:localmolloyDP}. 
Write 
\[
k(u)=|L(u)|=(1+\eps)\max\bigg\{\frac{\deg_G(u)}{\log\deg_G(u)},\,\frac{\delta}{\log\delta}\bigg\}\,,
\]
and set $\ell(u) = \max\{\deg_G(u)^{\eps/2}, \delta^{\eps/2}\}$ so that $\ell(u) \ge 192\log \Delta$ for all $u$.

With this notation, and in view of Lemma~\ref{lem:finishingblow}, it suffices to establish the following analogue of Lemma~3.5 in~\cite{Ber19}.

\begin{lemma}\label{lem:3.5}
The graph $H$ contains an independent set $I$ such that
\begin{enumerate}
\item
$|L_I(u)|\ge \ell(u)$ for all $u\in V(G_I)$, and
\item
$\deg^*_{\sH_I}(c) \le 24\log\Delta$ for all $c \in V(H_I)$.
\end{enumerate}
\end{lemma}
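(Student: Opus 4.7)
The plan is to follow the proof of Bernshteyn's Lemma~3.5 in~\cite{Ber19}, adapting it to the local parameters $k(u)$, $\ell(u)$, and $\deg_G(u)$. We define a random independent set $I\subseteq V(H)$ by a standard nibble: independently for each colour $c\in V(H)$, mark $c$ with probability $p$ (a global parameter of order $(\log\Delta)/\Delta$); then say that $c\in L(u)$ is \emph{kept} if $c$ is marked, no other marked colour lies in $L(u)$, and no marked colour $c'\in L(v)$ with $v\in N_G(u)$ satisfies $cc'\in E(H^*)$. Put $I$ equal to the set of kept colours; by construction $I$ is independent in $H$.

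The analysis follows the three-stage structure of~\cite{Ber19}. First, for each $u\in V(G)$ estimate $\EE|L_I(u)|$ by summing over $c\in L(u)$ the probability that no marked neighbour in $N_{H^*}(c)$ survives; triangle-freeness of $G$ lets us factor these events across the pairwise non-adjacent vertices of $N_G(u)$. The aim is a lower bound of the form $(1+\eps/4)\ell(u)$, and the hypothesis $|L(u)|\ge(1+\eps)\max(\deg(u)/\log\deg(u),\,\delta/\log\delta)$ supplies the multiplicative slack required, uniformly over the range $\delta\le\deg(u)\le\Delta$. Second, for each $c\in V(H)$ estimate $\EE\deg^*_{\sH_I}(c)$ by a similar calculation summing over $N_{H^*}(c)$, aiming for an upper bound of, say, $12\log\Delta$. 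Both computations depend only on the local structure of $G$ around $u$ and around the vertex $v_c$ with $c\in L(v_c)$.

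Third, apply Talagrand's inequality exactly as in~\cite{Ber19} to concentrate $|L_I(u)|$ and $\deg^*_{\sH_I}(c)$ around their expectations, with deviations of order $\sqrt{\ell(u)\log\Delta}$ and $O(\log\Delta)$ respectively. The threshold $\delta=(192\log\Delta)^{2/\eps}$ is pinned precisely so that $\ell(u)\ge\delta^{\eps/2}=192\log\Delta$ for every $u$: this makes the Talagrand deviation smaller than $\eps\ell(u)/4$, and simultaneously yields $24\log\Delta=\tfrac18\ell(u)$, matching the threshold required by the finishing blow (Lemma~\ref{lem:finishingblow}). The resulting failure probabilities are super-polynomially small in $\Delta^{-1}$, while each bad event depends only on marks within $G$-distance $2$ of $u$, hence on $\mathrm{poly}(\Delta)$-many others. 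The General Lov\'asz Local Lemma therefore applies routinely with uniform weights to produce the desired $I$.

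The main obstacle is guaranteeing concentration uniformly over the full degree range $\delta\le\deg(u)\le\Delta$. For a vertex $u$ of very low degree, $\ell(u)$ would naturally be $\deg(u)^{\eps/2}$, which could be too small for a Talagrand deviation of order $\sqrt{\ell(u)\log\Delta}$ to be negligible; this is exactly why the floor $\ell(u)\ge\delta^{\eps/2}\ge 192\log\Delta$ is needed, and it pins the value of $\delta$. Once this single technical point is settled, the remainder is a routine adaptation of Bernshteyn's argument — indeed, as the authors note in the introduction, his original bounds already contain enough slack to accommodate this local strengthening.
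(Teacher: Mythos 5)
Your random process is not the one that makes this lemma work, and the gap is fatal rather than technical. After a single round of independent Bernoulli marking with $p=\Theta((\log\Delta)/\Delta)$, each neighbour $v\in N_G(u)$ remains uncoloured with probability bounded below by an absolute constant (indeed $\Pr(v\text{ coloured})\le |L(v)|\,p\,(1-p)^{|L(v)|-1}\le e^{-1}+o(1)$), and an uncoloured $v$ typically still retains the colour $c'\in L(v)$ matched to $c$. Hence $\EE\,\deg^*_{\sH_I}(c)=\Omega(\deg_G(u))$ after one round, nowhere near the target $O(\log\Delta)$, and no concentration inequality (Talagrand or otherwise) can rescue an expectation that is off by a factor of $\Delta/\log\Delta$. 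To drive colour degrees down to $O(\log\Delta)$ with a marking nibble you would have to iterate it many times with careful bookkeeping (the Johansson/Molloy--Reed semi-random method), which is an entirely different and much longer argument, and one that does not obviously deliver the leading constant $1+\eps$.

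What the paper actually does (following Bernshteyn) is deduce Lemma~\ref{lem:3.5} from Lemma~\ref{lem:3.6}, whose random object is a \emph{uniformly random} independent subset $\bI'$ of $L_J(N_G(u))$, for an arbitrary fixed independent set $J$ outside $N_G[u]$. Because $G$ is triangle-free, $N_G(u)$ is independent, so this conditional distribution factorises: each $v\in N_G(u)$ independently receives a uniform element of $L_J(v)\cup\{\mathrm{blank}\}$ and is therefore left uncoloured with probability roughly $1/|L_J(v)|$ rather than a constant. This is precisely what makes $\EE\,\deg^*_{\sH_\bI}(c)\le 4\log\Delta$ and $\EE|L_\bI(u)|\ge k(u)\exp(-\deg_G(u)/k(u))\ge 2\ell(u)$ attainable in one shot; concentration then comes from a Chernoff bound for negatively correlated variables, and the Local Lemma is applied to bad events indexed by vertices of $G$ with the conditional bounds supplied by Lemma~\ref{lem:3.6}. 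Your observation about why $\delta=(192\log\Delta)^{2/\eps}$ is pinned --- so that $\ell(u)\ge 192\log\Delta$, making the deviations negligible and matching the $\tfrac18\ell$ threshold of the finishing blow --- is correct, but it sits on top of a sampling scheme that cannot produce the set $I$ the lemma asserts.
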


In exactly the same way that Lemma~3.5 in~\cite{Ber19} follows from Lemma~3.6 in~\cite{Ber19}, Lemma~\ref{lem:3.5} follows from the following result. We refer the reader to~\cite{Ber19} for further details.

\begin{lemma}\label{lem:3.6}
Fix a vertex $u\in V(G)$ and an independent set $J\subseteq L(\overline{N_G[u]})$. Let $\bI'$ be a uniformly random independent subset of $L_J(N_G(u))$ and let $\bI=J\cup \bI'$. Then
\begin{enumerate}
\item
$\Pr(|L_\bI(u)|< \ell(u)) \le \Delta^{-3}/8$, and
\item
$\Pr\left(\exists c\in L_\bI(u) \, : \, \deg^*_{\sH_\bI}(c) > 24\log\Delta\right) \le \Delta^{-3}/8$.
\end{enumerate}
\end{lemma}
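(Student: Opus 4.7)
The plan is to adapt Bernshteyn's proof of the analogous global statement (Lemma~3.6 of~\cite{Ber19}) to the locally varying list sizes. The key structural simplification is that triangle-freeness of $G$ forces $N_G(u)$ to be independent in $G$, so by axiom (3) of a cover there are no edges in $H$ between $L(v)$ and $L(v')$ for distinct $v,v'\in N_G(u)$. Hence an independent set in $H[L_J(N_G(u))]$ is an arbitrary transversal of the parts $\{L_J(v)\cup\{\star\}\}_{v\in N_G(u)}$, where $\star$ means ``pick nothing''. The uniform distribution $\bI'$ therefore factorises: independently for each $v\in N_G(u)$, pick a uniform element of $L_J(v)\cup\{\star\}$.

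For part~(1), because $J\subseteq L(\overline{N_G[u]})$ no element of $L(u)$ has an $H$-neighbour in $J$, so $L_J(u)=L(u)$ and $|L_J(u)|=k(u)$. For each $c\in L(u)$, axiom (4) gives at most one $H^*$-neighbour of $c$ in each $L(v)$, and the product structure yields
\begin{equation*}
\Pr(c\in L_\bI(u)) = \prod_{v\in N_G(u)}\left(1-\frac{\mathbf{1}[c \text{ has an }H^*\text{-nbr in } L_J(v)]}{|L_J(v)|+1}\right).
\end{equation*}
Using the invariant $|L_J(v)|\ge\ell(v)$ inherited from the outer iterative procedure (and the lower bound on $k(v)$), I would obtain a lower bound of the form $\EE|L_\bI(u)|\ge k(u)\cdot\exp\bigl(-\sum_{v}1/(k(v)+1)\bigr)$, which (since $\sum_v 1/k(v)\le \log\deg_G(u)/(1+\eps)$) is comfortably larger than $\ell(u)\cdot(\log\Delta)^{c}$ for some absolute $c>0$. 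I would then apply Talagrand's inequality to $|L_\bI(u)|$ viewed as a function of the independent coordinates indexed by $v\in N_G(u)$: changing one coordinate flips $|L_\bI(u)|$ by at most $2$, and the event $\{|L_\bI(u)|\ge s\}$ can be certified by the choices at at most $k(u)-s$ coordinates. This yields a tail of the form $\Pr(|L_\bI(u)|<\ell(u))\le\exp\bigl(-\Omega(\log^{2}\Delta)\bigr)\ll\Delta^{-3}/8$.

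For part~(2), I would fix $c\in L(u)$ and observe that $\deg^*_{\sH_\bI}(c)$ counts the $H^*$-neighbours of $c$ in the various $L_\bI(v)$, $v\in N_G(u)$. Such a neighbour $m\in L_J(v)$ survives iff nothing is picked from $L_J(v)$ and no $H^*$-neighbour of $m$ in $L_J(v')$ (for $v'\in N_G(v)$) is picked either. The product formula gives each such survival probability at most (essentially) $\Delta^{-1/(1+\eps)}$, so $\EE\deg^*_{\sH_\bI}(c)\le\deg_G(u)\cdot\Delta^{-1/(1+\eps)}=O(\log\Delta)$, well below $24\log\Delta$. A second Talagrand argument (with certificates of size $O(\log\Delta)$) gives a $c$-wise tail of order $\exp(-\Omega(\log\Delta))$, and a union bound over the at most $k(u)\le\Delta$ choices of $c$ absorbs into $\Delta^{-3}/8$.

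The principal obstacle is that the list sizes $k(v)$ vary across $N_G(u)$, so both the mean estimates and the Talagrand certificate sizes become vertex-dependent. One must check that the choice $\delta=(192\log\Delta)^{2/\eps}$ supplies enough slack for the $(1+\eps)$ factor in the hypothesis to absorb the accumulated lower-order losses simultaneously, particularly those stemming from possible depletion of $L_J(v)$ by elements of $J$. This is the only genuinely new ingredient beyond Bernshteyn's argument, and amounts to careful bookkeeping rather than new ideas.
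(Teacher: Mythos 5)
Your overall skeleton (factorising the uniform independent set of $H[L_J(N_G(u))]$ into independent uniform picks from $L_J(v)\cup\{\star\}$, bounding expectations, then concentrating) is the same as Bernshteyn's argument that the paper adapts, but the two expectation bounds---which are precisely where the local adaptation lives---are wrong as you derive them. For part (1) you bound each $\Pr(c\in L_\bI(u))$ individually by $\exp\bigl(-\sum_{v}1/(k(v)+1)\bigr)$. This fails twice over. First, the relevant denominators are $|L_J(v)|+1$, and $J$ is an \emph{arbitrary} independent set in $L(\overline{N_G[u]})$, so $L_J(v)$ may be depleted down to a single colour; there is no invariant $|L_J(v)|\ge\ell(v)$ available here (that is exactly the kind of statement the surrounding Lov\'asz Local Lemma argument is trying to establish, for all conditionings $J$ simultaneously). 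A single colour $c$ can then have survival probability as small as $2^{-\deg_G(u)}$. Second, even with undepleted lists, $\sum_{v\in N_G(u)}1/k(v)\le\log\deg_G(u)/(1+\eps)$ is false in the local setting: every neighbour of $u$ may have degree $\delta=(192\log\Delta)^{2/\eps}$, so $k(v)$ is only polylogarithmic in $\Delta$ while $\deg_G(u)$ may equal $\Delta$, making your sum polynomially large. The correct step---the one the paper explicitly flags as the essential modification (``the application of Jensen's Inequality'')---is to average over $c\in L(u)$: by the matching condition in the definition of a cover, at most $|L_J(v)|$ colours of $L(u)$ have a neighbour in $L_J(v)$, so $\sum_{c\in L(u)}\sum_v \mathbf{1}_{c,v}/(|L_J(v)|+1)\le\deg_G(u)$, and convexity gives $\EE|L_\bI(u)|\ge k(u)\exp(-\deg_G(u)/k(u))\ge 2\ell(u)$. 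The exponent involves $k(u)$, the list size at $u$ itself, which is what makes the neighbours' (possibly tiny, possibly depleted) lists irrelevant. Your part (2) has the same defect: the survival probability of $c$'s neighbour in $L_J(v)$ is $1/(|L_J(v)|+1)$, which need not be anywhere near $\Delta^{-1/(1+\eps)}$ when $\deg(v)$ is small or $L_J(v)$ is depleted, so $\EE\deg^*_{\sH_\bI}(c)=O(\log\Delta)$ does not follow from your estimate; one must exploit the conjunction with the event $c\in L_\bI(u)$ (as in Bernshteyn's argument) rather than bound $\deg^*_{\sH_\bI}(c)$ unconditionally term by term.

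The concentration step is also shakier than you suggest. The event $\{|L_\bI(u)|\ge s\}$ is not certifiable by $k(u)-s$ coordinates (a certificate can only witness \emph{removals}, i.e.\ the complementary event), and applying Talagrand to the number of removed colours gives a tail of order $\exp(-\Omega(\ell(u)^2/k(u)))$, which is useless when $k(u)\approx\Delta/\log\Delta$ and $\ell(u)=\Delta^{\eps/2}$ with $\eps<1$. The paper instead follows Bernshteyn in applying a Chernoff bound for negatively correlated indicators $\mathbf{1}[c\in L_\bI(u)]$, which yields $\Pr(|L_\bI(u)|<\ell(u))\le\exp(-\ell(u)/4)\le\Delta^{-48}$ directly from $\EE|L_\bI(u)|\ge 2\ell(u)$. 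In short, what you describe as ``careful bookkeeping'' is the actual content of the lemma, and the bookkeeping you propose does not close.
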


\begin{proof}[Proof sketch]
Since the proof is nearly the same as the proof of Lemma~3.6 in~\cite{Ber19}, we only highlight the essential differences.

The two proofs are completely identical until the application of Jensen's Inequality (``by the convexity\dots''), where we instead get
\begin{align*}
\EE\big|L_\bI(u)\big|
  &\ge k(u)\exp\left(-\frac{\deg_G(u)}{k(u)}\right)
\\&= (1+\eps)\max\bigg\{\frac{\deg_G(u)^{1-1/(1+\eps)}}{\log \deg_G(u)},\, \frac{\delta^{1-1/(1+\eps)}}{\log \delta}\bigg\}
\\&> 2\max\big\{\deg_G(u)^{\eps/2},\, \delta^{\eps/2}\big\} = 2\ell(u)\,,
\end{align*}
where the final inequality holds for $\Delta$ (and hence $\delta$) large enough in terms of $\eps$, because by convexity $1-1/(1+\eps) > \eps/2$ for $0<\eps<1$. The application of a Chernoff Bound for negatively correlated random variables applies in the same way as in Bernshteyn's proof to yield that
\[
\Pr(|L_\bI(u)|< \ell(u)) \le \exp(-\ell(u)/4) \le \Delta^{-48},
\]
which is at most $\Delta^{-3}/8$ for $\Delta \ge 2$.

For the second part of the proof, we instead for all $c \in L(u)$ define
\[
p_c = \Pr\left(c\in L_\bI(u) \text{ and } \deg^*_{\sH_\bI}(c) > 24\log\Delta\right)
\]
and it will suffice to show $p_c \le \Delta^{-4}$. The argument is the same to show that for $\Delta$ large enough in terms of $\eps$,
\[
\EE \deg^*_{\sH_\bI}(c) \le 4\log \Delta\,,
\]
and a similar second application of a Chernoff Bound then yields
\begin{align*}
p_c
& \le \Pr\left(\deg^*_{\sH_\bI}(c) > 24\log\Delta\right)\\
& \le \Pr(\deg^*_{\sH_\bI}(c) > \EE\deg^*_{\sH_\bI}(c) + 20\log\Delta)\\
& \le \Delta^{-20/3} \le \Delta^{-4},
\end{align*}
as required.
\end{proof}

\section{A necessary minimum degree condition for bipartite graphs}
\label{sec:necessary}

In Theorem~\ref{thm:localmolloy} the condition is only truly local when the graph is of minimum degree $\delta=(192\log\Delta)^{2/\eps}$, which grows with the maximum degree $\Delta$. 
The result is made strictly stronger by reducing $\delta$. 
In this section we show that even for bipartite graphs the conclusion of Theorem~\ref{thm:localmolloy} requires some $\omega(1)$ bound on $\delta$ as $\Delta\to\infty$. We state and prove the result specifically with $\deg(u)/\log\deg(u)$ as the target local list size per vertex $u$. The reader can check that any sublinear and superlogarithmic function will do, but with a different tower of exponentials.

\begin{proposition}\label{prop:necessary}
For any $\delta$, there is a bipartite graph of minimum degree $\delta$ and maximum degree $\exp^{\delta-1}(\delta)$ (so a tower of exponentials of height $\delta-1$) that is not $L$-colourable for some list assignment $L : V(G)\to 2^{\EZ^+}$ satisfying \[|L(u)| \ge \frac{\deg(u)}{\log \deg(u)}\] for all $u\in V(G)$.
\end{proposition}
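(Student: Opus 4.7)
The plan is to build the graph recursively in $\delta-1$ stages, each stage roughly exponentiating the previous maximum degree. I define $D_1 := \delta$ and $D_{i+1} := \lceil e^{D_i} \rceil$ for $i \ge 1$, so that $D_{\delta-1} = \exp^{\delta-1}(\delta)$ and $\log D_{i+1} \approx D_i$. Set $k_i := \lceil D_i/\log D_i \rceil$. I will construct bipartite ``forcing gadgets'' $F_1, \ldots, F_{\delta-1}$, each with a distinguished root $r_i$ of degree $D_i$ and list $L(r_i)$ of size $k_i$, such that in any proper $L$-colouring of $F_i$ the root $r_i$ is forced to take a pre-specified colour $c_i^{\star} \in L(r_i)$. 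The final graph will be $G := F_{\delta-1}$, in which every colour of $L(r_{\delta-1})$ is blocked, showing $G$ is not $L$-colourable.

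The recursive step is the easier half. Given $F_i$, I introduce a new root $r_{i+1}$ of degree $D_{i+1}$ and list $L(r_{i+1}) = \{c_1, \ldots, c_{k_{i+1}}\}$. Using $D_{i+1} \approx k_{i+1} \cdot D_i$, I partition the $D_{i+1}$ neighbours of $r_{i+1}$ into $k_{i+1}$ groups of $D_i$. The vertices of the $j$-th group serve as roots of disjoint copies of $F_i$, each configured so its own root is forced to colour $c_j$. Every colour of $L(r_{i+1})$ is thereby realised by some neighbour, so $r_{i+1}$ has no valid colour. Bipartiteness is maintained by alternating colour classes across levels; minimum degree $\delta$ is realised at the bottom vertices of $F_1$; and maximum degree $D_{\delta-1} = \exp^{\delta-1}(\delta)$ is attained at the top root.

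The hard part will be the base case $F_1$, where no further recursion is available. I need a bipartite gadget whose root $r_1$ has degree $\delta$ and list of size $k_1 = \lceil\delta/\log\delta\rceil$ (which is at most $\delta$ for $\delta \ge 3$), such that the $\delta$ neighbours of $r_1$ collectively cover every colour of $L(r_1)\setminus\{c_1^{\star}\}$. The list-size condition forbids singleton lists, so I cannot just pre-set each neighbour's colour. Instead, I plan to attach a small Erd\H{o}s--Rubin--Taylor-style transversal gadget to the neighbours, with lists drawn from the reduced universe $L(r_1)\setminus\{c_1^{\star}\}$ of size $k_1-1$, so that every colour in that set must be used somewhere among the neighbours of $r_1$. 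Because $k_1-1<\delta$, there is room to design the sub-gadget while respecting the local list-size condition throughout. For very small $\delta$ (particularly $\delta=2$, where $k_1>\delta$), the base case needs an ad hoc modification, but this still fits within the permitted max-degree budget $\exp^{\delta-1}(\delta)$.

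Once the base is settled, the remaining verifications are routine: the inequality $|L(u)| \ge \deg(u)/\log\deg(u)$ will hold at every $u$ by the choice of $k_i$ at each level; and the non-colourability of $G$ follows by induction on the level, since $r_{\delta-1}$ ends up with every colour of its list blocked.
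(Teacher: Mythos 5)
Your high-level architecture (a $(\delta-1)$-level recursion in which each new root's degree is the exponential of the previous level's, and the arithmetic $D_{i+1}\approx k_{i+1}\cdot D_i$ with $k_i\approx D_i/\log D_i$) matches the paper's, but the proposal has a genuine gap exactly where you flag ``the hard part'': the base gadget $F_1$ is never constructed, and your strategy of \emph{forcing} each root to a prescribed colour makes that base case essentially as hard as the proposition itself. A gadget that forces a root $r$ to a single colour $c^{\star}$ is precisely a gadget that becomes non-$L$-colourable once $c^{\star}$ is removed from $L(r)$; so to start the recursion you already need a bipartite non-colourable example in which \emph{every} vertex, including the low-degree ones, carries a list of size at least $\deg/\log\deg$ (which forces lists of size at least $3$ on every vertex of degree $2$ or more, and gives you no vertices with singleton lists to anchor the forcing). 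The appeal to an ``Erd\H{o}s--Rubin--Taylor-style transversal gadget'' and an ``ad hoc modification'' for small $\delta$ does not discharge this; it is not clear such a gadget exists with root degree only $\delta$, and no candidate is exhibited.

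The paper avoids forcing altogether. Its recursion maintains \emph{non-colourability} of $G_i$ directly (if the new universal vertex $v_{i+1}$ takes colour $j_{i+1}$, the $j$th copy of $G_i$ reverts to its old, uncolourable list assignment), and --- this is the trick you are missing --- it allows the bottom vertices to start with \emph{singleton} lists in $G_0=K_{1,\delta}$, which would be illegal in the final graph, and then has each bottom vertex gain exactly one new neighbour ($v_{i+1}$) and exactly one new colour ($j_{i+1}$) per level. The invariant $|L_i(b)|=\deg(b)=i+1$ is thus maintained for the bottom vertices, and only at the last level $i=\delta-1$, when their degree has reached $\delta$, is the (weaker) required inequality $|L(b)|\ge\deg(b)/\log\deg(b)$ checked --- it holds because $\delta\ge\delta/\log\delta$. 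In other words, the illegal singletons are amortised over the $\delta-1$ levels of the recursion rather than being replaced by a forcing sub-gadget. Without either this amortisation or an explicit construction of $F_1$, your proof does not go through.
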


\begin{proof}
The construction is a recursion, iterated $\delta-1$ times.

For the basis of the recursion, let $G_0$ be the star $K_{1,\delta}$ of degree $\delta$. We write $A_0$ as the set containing the centre $v_0$ of the star and $B_0$ as the set of all non-central vertices. Note that, with the assignment $L_0$ that assigns the list $\{1_0,\dots,\delta_0\}$ to the centre and lists $\{i_0\}$, $i\in[\delta]$, to the non-central vertices, $G_0$ is not $L_0$-colourable.

We recursively establish the following properties for $G_i$, $A_i$, $B_i$, $L_i$, where $0\le i \le \delta-1$:
\begin{enumerate}
\item $G_i$ is bipartite with partite sets $A_i$ and $B_i$;
\item $A_i$ has all vertices of degree at least $\delta$ and at most $\exp^i(\delta)$, with some vertex $v_i$ attaining the maximum $\exp^i(\delta)$;
\item $B_i$ has $\exp^i(\delta)$ vertices of degree $i+1$;
\item $|L_i(a)| \ge \deg(a)/\log \deg(a)$ for all $a\in A_i$ and $|L_i(b)| \ge \deg(b)$ for all $b\in B_i$; and
\item $G_i$ is not $L_i$-colourable.
\end{enumerate}
These properties are clearly satisfied for $i=0$.

From step $i$ to step $i+1$, we form $G_{i+1}$ by taking $\exp(\exp^i(\delta))/\exp^i(\delta)$ copies of $G_i$ and adding a vertex $v_{i+1}$ universal to all of the $B_i$-vertices. Let $A_{i+1}$ be $v_{i+1}$ together with all $A_i$-vertices, and $B_{i+1}$ be all of the $B_i$-vertices. Label each copy of $G_i$ with $j$ from $1$ to $\exp(\exp^i(\delta))/\exp^i(\delta)$. We set $L_{i+1}(v_{i+1}) = \{1_{i+1},\dots,\exp(\exp^i(\delta))/\exp^i(\delta)_{i+1}\}$ and add colour $j_{i+1}$ to $L_i(b)$ to form $L_{i+1}(b)$ for every $B_i$-vertex $b$ in the $j$th copy of $G_i$. It is routine to check then that $G_{i+1}$, $A_{i+1}$, $B_{i+1}$, $L_{i+1}$ satisfy the promised properties.

The proposition follows by taking $G_{\delta-1}$.
\end{proof}

As a final remark on minimum degree or minimum list size conditions, we note that our proof of Theorem~\ref{thm:localmolloy} can be adapted to reduce $\delta=(192\log\Delta)^{2/\eps}$ as a function of $\Delta$ by increasing the leading constant `1' in the list size condition. 
Indeed, this removes the dependence on $\eps$ and brings the result much closer to the triangle-free case of the significantly more general local colouring result of Bonamy \emph{et al.}~\cite{BKNP18+}, which has a minimum degree condition of $(\log\Delta)^2$. 
Here, as we focus on triangle-free graphs we prefer to aim for the best possible constant at the expense of the cutoff value $\delta$.

\bibliography{frac}
\bibliographystyle{habbrv}

\end{document}